\newcommand{\R}{\mathbb R}
\newtheorem{proposition}{Proposition}[section]
\newtheorem{theorem}{Theorem}[section]
\newtheorem{remark}{Remark}[section]
\newtheorem{lemma}{Lemma}[section]
\numberwithin{equation}{section}
\numberwithin{equation}{section}
\let\z=\zeta
\title[Inhomogeneous Parabolic Equations]{Well-posedness and blow-up for an inhomogeneous semilinear parabolic equation}
\author{Mohamed Majdoub}
\address{Department of Mathematics, College of Science, Imam Abdulrahman Bin Faisal University, P. O. Box 1982, Dammam, Saudi Arabia\newline
Basic and Applied Scientific Research Center, Imam Abdulrahman Bin Faisal University, P.O. Box 1982, 31441, Dammam, Saudi Arabia}
\email{mmajdoub@iau.edu.sa}
\begin{document}
\begin{abstract}
       We consider the large-time behavior of sign-changing solutions of the inhomogeneous equation $u_t-\Delta u=|x|^\alpha |u|^{p}+\zeta(t)\,{\mathbf w}(x)$ in  $(0,\infty)\times\R^N$, where $N\geq 3$, $p>1$, $\alpha>-2$, $\z, {\mathbf w}$ are continuous functions such that $\z(t)=t^\sigma$ or $\z(t)\sim t^\sigma$ as $t\to 0$, $\z(t)\sim t^m$ as $t\to\infty$ . We obtain local existence for $\sigma>-1$. We also show the following:
\begin{itemize}
\item If $m\leq 0$, $p<\frac{N-2m+\alpha}{N-2m-2}$ and $\int_{\R^N}{\mathbf w}(x)dx>0$, then all solutions blow up in finite time;
\item If $m> 0$, $p>1$ and $\int_{\R^N}{\mathbf w}(x)dx>0$, then all solutions blow up in finite time;
\item If $\z(t)=t^\sigma$ with $-1<\sigma<0$, then for $u_0:=u(t=0)$ and ${\mathbf w}$ sufficiently small the solution exists globally.
\end{itemize}
We also discuss lower dimensions. The main novelty in this paper is that blow up depends on the behavior of $\z$ at infinity.

\end{abstract}

\subjclass[2010]{35K05, 35A01, 35B44}

\keywords{Inhomogeneous parabolic equation, Global existence, Finite time blow-up, Differential inequalities, Forcing term depending of time and space, Critical Fujita exponent}

\maketitle
\date{\today}


\section{Introduction}
We study the global existence and blow up of solutions of the following
semilinear parabolic Cauchy problem
\begin{equation}
\label{main}
\left\{
\begin{matrix}
u_t-\Delta u=|x|^\alpha |u|^{p}+\z(t)\,{\mathbf w}(x)\quad \mbox{in}\quad (0,\infty)\times\R^N,\\
u(0,x)= u_0(x)\qquad\mbox{in}\quad\R^N,\\
\end{matrix}
\right.
\end{equation}
where $N\geq 3$, $\alpha\in\R$, $p>1$ and $\z, {\mathbf w}$ are given functions. More specific assumptions on ${\mathbf w}$, $\z$ and $u_0$ will be made later. Our model \eqref{main} arises in many physical phenomena and biological species theories, such as the concentration of diffusion of some fluid, the density of some biological species, and heat conduction phenomena, see \cite{Hu, GV, QS, MT1, MT2} and references therein.

 We are interested in finding the critical exponent which separates the existence and nonexistence of global solutions of \eqref{main}. In the case $\z\equiv 0$ or ${\mathbf w}\equiv 0$, problem \eqref{main} reduces to
\begin{equation}
\label{Fuj}
\left\{
\begin{matrix}
u_t-\Delta u=|x|^\alpha |u|^{p} \quad \mbox{in}\quad (0,\infty)\times\R^N,\\
u(0,x)= u_0(x)\qquad\mbox{in}\quad\R^N.\\
\end{matrix}
\right.
\end{equation}
For nonnegative initial data, the solution of \eqref{Fuj} blows up in finite time if $u_0$ is sufficiently large. For arbitrary initial data $u_0\geq 0$, we have the following dichotomy:
\begin{itemize}
\item if $\alpha >-2$ and $1<p\leq 1+\frac{2+\alpha}{N}$, then every nontrivial solution $u(t,x)$ blows up in finite time;
\item if $\alpha >-2$, $p> 1+\frac{2+\alpha}{N}$ and $u_0$ is sufficiently small, then $u(t,x)$ is a global solution
\end{itemize}

This result was proved by Fujita in \cite{fujita} for $\alpha=0$, $p\neq 1+\frac{2}{N}$, and by Hayakawa in \cite{Hayak} for $\alpha=0$, $p=1+\frac{2}{N}$. Later, Qi in \cite{Qi} was able to prove similar results for a wide class of parabolic problems including in particular \eqref{Fuj}. See also \cite{LM}. The number $p_F:=1+\frac{2+\alpha}{N}$ is called the critical Fujita exponent for problem \eqref{Fuj}. \\
Note that the case $\alpha=0,\, \z\equiv 1$ was investigated in \cite{BLZ}. It was shown, among other results, that \eqref{main} has no global solutions provided that $p<\frac{N}{N-2}$ and $\int_{\R^N}\,{\mathbf w}(x)\,dx>0$. Recently in \cite{JKS}, the authors consider \eqref{main} with $\alpha=0$ and $\z(t)=t^\sigma$ where $\sigma >-1$. They showed that the critical exponent is given by
\begin{eqnarray*}
 p^*(\sigma)&=&\; \left\{
\begin{array}{cllll}\frac{N-2\sigma}{N-2\sigma-2}
\quad&\mbox{if}&\quad -1<\sigma<0,\\\\ \infty \quad
&\mbox{if}&\quad \sigma>0.
\end{array}
\right.
\end{eqnarray*}
We refer the interested reader to the survey papers \cite{DL, Le}. See also \cite{AG, EG, Gia, Stu, Zh1, Zh2} for related problems. In particular, the chemical reaction diffusion equation with special diffusion coefficient $D=|x|^2$ has been extensively studied  in \cite{AG, Gia, Hu, Tan}. Roughly speaking, this can be used to motivate the restriction $\alpha>-2$ which is made in some cases.

Our main motivation for the current work comes from the paper \cite{JKS}, where the authors consider only the case where $\alpha=0$ and $\z(t)=t^\sigma$, $\sigma>-1$. We will improve the blow-up results obtained in \cite{JKS} by considering $\alpha>-2$ and allowing that $\z$  behaves like $t^\sigma, \sigma>-1$ as $t\to 0$ and like $t^m, m\in\R$ as $t\to\infty$. For simplicity of presentation, we shall restrict our attention to $\z$ satisfying either
\begin{equation}
\label{zee}
\z(t)=t^\sigma,
\end{equation}
or
\begin{eqnarray}
\label{ze}
 \z(t)&=&\; \left\{
\begin{array}{cllll}t^\sigma
\quad&\mbox{if}&\quad 0<t<1,\\\\ t^m \quad
&\mbox{if}&\quad t\geq 1,
\end{array}
\right.
\end{eqnarray}
where $\sigma>-1$ and $m \in\R$.

As is a standard practice, we study the local well-posedness of \eqref{main} via the associated integral equation:
\begin{equation}
\label{integral}
u(t)= {\rm e}^{t\Delta}u_{0}+\int\limits_{0}^{t}{\rm e}^{(t-\tau)\Delta}\,\left(|\cdot|^{\alpha} |u(\tau)|^{p}\right)\,d\tau+\int\limits_{0}^{t}{\rm e}^{(t-\tau)\Delta}\,\left(\z(\tau)\,{\mathbf w}\right)\,d\tau,
\end{equation}
where ${\rm e}^{t\Delta}$ is the linear heat semi-group. Using fixed point argument in suitable complete metric space together with a recent smoothing estimate proved in \cite{BTW}, we obtain the following existence results.
\begin{theorem}
\label{WP1}
Suppose $N\geq 3$, $-2<\alpha\leq 0$, ${\mathbf w}\in C_B(\R^N):=C(\R^N)\cap L^\infty(\R^N)$ and $\z$ is given by \eqref{ze} or \eqref{zee}. Then, for any $u_0\in C_B(\R^N)$, the Cauchy problem \eqref{main} has a unique maximal $C_B$-mild solution $u$ on $[0,T^*)\times \R^N$ such that if $T^*<\infty$, then $\displaystyle\lim_{t\to T^*}\,\|u(t)\|_{L^\infty(\R^N)}=\infty.$ Furthermore, if $u_0\geq 0$ and ${\mathbf w}\geq 0$, then the solution $u$ is nonnegative.
\end{theorem}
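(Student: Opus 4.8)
The plan is to solve the integral formulation \eqref{integral} by a Banach fixed point argument in a small ball of $L^\infty\big((0,T)\times\R^N\big)$ for $T>0$ small (equivalently, in a suitable subset of $C\big([0,T],C_B(\R^N)\big)$, with the regularity in $t$ required by the notion of $C_B$-mild solution recovered afterwards from the smoothing properties of $e^{t\Delta}$ applied to each term of \eqref{integral}), and then to extend the local solution to a maximal one in the usual way. Fix $u_0,{\mathbf w}\in C_B(\R^N)$, put $M:=2\big(\|u_0\|_{L^\infty}+1\big)$, and for $0<T\le 1$ let $E_T:=\{u\in L^\infty((0,T)\times\R^N):\ \|u\|_{L^\infty}\le M\}$, a complete metric space for the $L^\infty$ distance. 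Define $\Phi$ on $E_T$ by the right-hand side of \eqref{integral}. Since for local existence only the behavior of $\z$ near $t=0$ matters, the forms \eqref{zee} and \eqref{ze} are handled uniformly. Everything then reduces to three estimates: a bound on the forcing Duhamel term, a bound on the nonlinear Duhamel term, and the Lipschitz (difference) estimate for the latter.

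For the forcing term, since $e^{t\Delta}$ is a contraction on $L^\infty$ and $\z\in L^1(0,1)$ when $\sigma>-1$,
\[
\Big\|\int_{0}^{t}e^{(t-\tau)\Delta}\big(\z(\tau)\,{\mathbf w}\big)\,d\tau\Big\|_{L^\infty}\ \le\ \|{\mathbf w}\|_{L^\infty}\int_{0}^{t}|\z(\tau)|\,d\tau\ \le\ C_{{\mathbf w}}\,t^{\sigma+1},
\]
which tends to $0$ as $t\to 0^{+}$. For the nonlinear term the difficulty is that, for $-2<\alpha\le 0$, the Hardy--Hénon weight $|x|^{\alpha}$ is unbounded near the origin, so $|x|^{\alpha}|u(\tau)|^{p}\notin L^\infty$ even when $u\in E_T$; this is precisely where the smoothing estimate of \cite{BTW} is used, in the form $\|e^{s\Delta}(|\cdot|^{\alpha}f)\|_{L^\infty}\le C\,s^{\alpha/2}\,\|f\|_{L^\infty}$ for $s>0$. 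Combined with $\big||u|^{p}-|v|^{p}\big|\le p\,(|u|+|v|)^{p-1}|u-v|$ (valid since $p>1$), it gives for $u,v\in E_T$
\[
\Big\|\int_{0}^{t}e^{(t-\tau)\Delta}\big(|\cdot|^{\alpha}(|u(\tau)|^{p}-|v(\tau)|^{p})\big)\,d\tau\Big\|_{L^\infty}\ \le\ C\,p\,(2M)^{p-1}\Big(\int_{0}^{t}(t-\tau)^{\alpha/2}\,d\tau\Big)\,\|u-v\|_{L^\infty},
\]
where $\int_{0}^{t}(t-\tau)^{\alpha/2}\,d\tau=\tfrac{2}{\alpha+2}\,t^{1+\alpha/2}$ is finite precisely because $\alpha>-2$ and tends to $0$ as $t\to 0^{+}$; taking $v\equiv 0$ bounds the nonlinear Duhamel term by $C\,M^{p}\,t^{1+\alpha/2}$. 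Collecting the three estimates and choosing $T=T(M,\|{\mathbf w}\|_{L^\infty},p,\alpha)>0$ small enough, $\Phi$ maps $E_T$ into itself and is a strict contraction, hence has a unique fixed point, the desired $C_B$-mild solution on $[0,T]$.

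To obtain the maximal solution I would set $T^{*}:=\sup\{T>0:\ \eqref{main}\text{ has a }C_B\text{-mild solution on }[0,T]\}$. The local existence time furnished above from an initial time $t_0$ is bounded below by a positive quantity depending only on $\|u(t_0)\|_{L^\infty}$, $\|{\mathbf w}\|_{L^\infty}$, $p$, $\alpha$ and local bounds on $\z$; hence if $T^{*}<\infty$ and $\|u(t)\|_{L^\infty}$ stayed bounded as $t\uparrow T^{*}$ one could restart from $t_0$ close to $T^{*}$ and continue past $T^{*}$, a contradiction, so $\|u(t)\|_{L^\infty}\to\infty$ as $t\to T^{*}$. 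Uniqueness on $[0,T^{*})$ follows from local uniqueness by a standard connectedness argument. For the last assertion, when $u_0\ge 0$, ${\mathbf w}\ge 0$ (and $\z\ge 0$, which holds for both \eqref{zee} and \eqref{ze}), run the Picard iteration $u^{(0)}\equiv 0$, $u^{(n+1)}=\Phi(u^{(n)})$: the first iterate $e^{t\Delta}u_0+\int_0^t e^{(t-\tau)\Delta}(\z(\tau){\mathbf w})\,d\tau$ is $\ge 0$ because $e^{t\Delta}$ preserves positivity, and inductively $u^{(n+1)}\ge e^{t\Delta}u_0+\int_0^t e^{(t-\tau)\Delta}(\z(\tau){\mathbf w})\,d\tau\ge 0$; since $u^{(n)}\to u$ in $L^\infty((0,T)\times\R^N)$, the limit $u$ is nonnegative. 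The only genuinely non-routine input is the \cite{BTW} smoothing estimate: the restriction $\alpha\le 0$ is what keeps $|x|^{\alpha}$ bounded at infinity so that $e^{s\Delta}(|x|^{\alpha}f)$ is controlled in $L^\infty$ at all, while $\alpha>-2$ is exactly what is needed for the time integral $\int_{0}^{t}(t-\tau)^{\alpha/2}\,d\tau$ to converge; everything else is the classical contraction-and-continuation machinery.
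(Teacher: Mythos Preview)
Your proof is correct and follows essentially the same route as the paper: a contraction in a ball of $C_B$ using the \cite{BTW} smoothing estimate $\|e^{s\Delta}(|\cdot|^{\alpha}f)\|_{L^\infty}\le C\,s^{\alpha/2}\|f\|_{L^\infty}$ for the Hardy--H\'enon term, with the integrability of $(t-\tau)^{\alpha/2}$ coming precisely from $\alpha>-2$, then the standard continuation/blow-up alternative and positivity via preservation under $e^{t\Delta}$. The only minor difference is that the paper establishes unconditional uniqueness in $C_B$ up front via the singular Gronwall inequality (Proposition~\ref{SGI}), whereas you rely on local uniqueness from the contraction plus a connectedness argument; both are routine, but note that your contraction only gives uniqueness inside $E_T$, so the connectedness step needs the observation that your difference estimate applies to \emph{any} two bounded $C_B$-mild solutions, not just those in the fixed ball.
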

\begin{remark}
For $N\geq 3$ and $-2<\alpha\leq 0$, we have $0\leq -\alpha <N$. Hence, we can apply the smoothing effect given by Proposition \ref{Key}.
In addition, as we will see in the estimate \eqref{Cont1}, the assumption $\alpha>-2$ is crucial to make $T^{1+\alpha/2}$ small for $T>0$ small.
\end{remark}

For $\alpha >0$, we introduce as in \cite{Wang} the function $\nu(x)=\left(1+|x|\right)^{\frac{\alpha}{p-1}}$ and define
\begin{equation}
\label{Cnu}
{C}_{\nu}(\R^N)=\Big\{\, \varphi\in C(\R^N);\;\;\; \|\nu\varphi\|_{L^\infty}<\infty\,\Big\},
\end{equation}
endowed with the norm
\begin{equation}
\label{Nnu}
\|\varphi\|_{\nu}=\|\nu\varphi\|_{L^\infty}.
\end{equation}

\begin{theorem}
\label{WP2}
Suppose $N\geq 1$, $\alpha>0$, ${\mathbf w}\in {C}_{\nu}(\R^N)$ and $\z$ is given by \eqref{ze} or \eqref{zee}. Then, for any $u_0\in { C}_{\nu}(\R^N)$, the Cauchy problem \eqref{main} has a unique maximal classical solution $u$ on $[0,T^*)\times \R^N$ such that if $T^*<\infty$, then $\displaystyle\lim_{t\to T^*}\,\|u(t)\|_{\nu}=\infty.$
\end{theorem}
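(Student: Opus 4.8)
The plan is to realise the solution of \eqref{integral} as a fixed point in the weighted space $C_\nu$, and then bootstrap it to a classical solution; the scheme runs parallel to Theorem \ref{WP1}, with the weighted framework of \cite{Wang} playing the role that the smoothing estimate of Proposition \ref{Key} plays for $\alpha\le0$. The backbone is the linear bound: for each $T>0$ there is $C=C(T)$ with
\[
\|{\rm e}^{t\Delta}\varphi\|_\nu\le C\,\|\varphi\|_\nu\qquad\text{for all }0<t\le T,\ \varphi\in C_\nu .
\]
Since $\nu\ge1$ we have $C_\nu\hookrightarrow C_B$, so ${\rm e}^{t\Delta}\varphi$ is well defined and continuous, and I would prove the bound by splitting the Gaussian convolution, at each $x$, over $\{|y-x|\le|x|/2\}$ and its complement. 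On the first region $(1+|y|)^{-\alpha/(p-1)}\lesssim(1+|x|)^{-\alpha/(p-1)}$, so that contribution is directly $\lesssim\|\varphi\|_\nu/\nu(x)$; on the complement $|x-y|\ge|x|/2$, so part of the Gaussian exponent yields a factor ${\rm e}^{-c|x|^2/t}$, and the elementary inequality $(1+|x|)^{\alpha/(p-1)}{\rm e}^{-c|x|^2/t}\le C(T)$ for $0<t\le T$ again gives $\lesssim\|\varphi\|_\nu/\nu(x)$; for $|x|\le1$ one simply uses $\nu\ge1$ and $\|{\rm e}^{t\Delta}\varphi\|_{L^\infty}\le\|\varphi\|_{L^\infty}\le\|\varphi\|_\nu$.

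The second ingredient is the algebraic identity behind the choice of $\nu$. If $w\in C_\nu$ then $|w(x)|\le\|w\|_\nu(1+|x|)^{-\alpha/(p-1)}$, so, using $\alpha>0$ (hence $|x|^\alpha\le(1+|x|)^\alpha$) and $\tfrac{\alpha p}{p-1}-\alpha=\tfrac{\alpha}{p-1}$,
\[
\nu(x)\,|x|^\alpha|w(x)|^p\le\|w\|_\nu^p\Big(\tfrac{|x|}{1+|x|}\Big)^\alpha\le\|w\|_\nu^p ,
\]
that is, $\big\||\cdot|^\alpha|w|^p\big\|_\nu\le\|w\|_\nu^p$; combining this with $\big||a|^p-|b|^p\big|\le p\big(|a|^{p-1}+|b|^{p-1}\big)|a-b|$ gives the Lipschitz version $\big\||\cdot|^\alpha(|w_1|^p-|w_2|^p)\big\|_\nu\le p\big(\|w_1\|_\nu^{p-1}+\|w_2\|_\nu^{p-1}\big)\|w_1-w_2\|_\nu$. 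The third ingredient is trivial but essential: on $0<\tau\le T\le1$ we have $\z(\tau)=\tau^\sigma$ in both \eqref{zee} and \eqref{ze}, and $\sigma>-1$ makes $\tau\mapsto\z(\tau)$ integrable at the origin, so $\int_0^t\z(\tau)\,d\tau\le T^{1+\sigma}/(1+\sigma)$.

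With these in hand I would write $v=u-{\rm e}^{t\Delta}u_0$ and seek $v$ as a fixed point of
\[
\Psi(v)(t)=\int_0^t{\rm e}^{(t-\tau)\Delta}\big(|\cdot|^\alpha\,|{\rm e}^{\tau\Delta}u_0+v(\tau)|^p\big)\,d\tau+\int_0^t{\rm e}^{(t-\tau)\Delta}\big(\z(\tau)\,{\mathbf w}\big)\,d\tau
\]
in the complete metric space $\{v\in C([0,T],C_\nu):v(0)=0,\ \sup_{[0,T]}\|v(t)\|_\nu\le M\}$; working with $v$ rather than $u$ is convenient because ${\rm e}^{t\Delta}$ is bounded but \emph{not} strongly continuous at $t=0$ on $C_\nu$, whereas the Duhamel integrals are continuous on $[0,T]$ and vanish at $t=0$. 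The linear bound together with the two nonlinear estimates gives $\|\Psi(v)(t)\|_\nu\le C_1T(\|u_0\|_\nu+M)^p+C_1T^{1+\sigma}\|{\mathbf w}\|_\nu$ and $\|\Psi(v_1)-\Psi(v_2)\|\le C_1T(\|u_0\|_\nu+M)^{p-1}\|v_1-v_2\|$ on the ball, so choosing $M$ of the order of $\|u_0\|_\nu+\|{\mathbf w}\|_\nu$ and then $T$ small enough (depending on $M$) makes $\Psi$ a contraction into itself. This produces a unique local mild solution; since the existence time it gives stays bounded below as long as $\|u(t_0)\|_\nu$ remains bounded, a standard continuation argument yields the maximal time $T^*$ and the alternative $\lim_{t\to T^*}\|u(t)\|_\nu=\infty$ when $T^*<\infty$. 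Finally, a parabolic bootstrap — local $L^q$-parabolic regularity followed by Schauder estimates, using that $|x|^\alpha$ is locally Hölder for $\alpha>0$ — upgrades $u$ to an element of $C^{2,1}((0,T^*)\times\R^N)$ solving \eqref{main} classically, and uniqueness in this class follows from a Gronwall argument on \eqref{integral}.

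The step I expect to be the main obstacle is the linear bound above, together with the bookkeeping around it: proving that ${\rm e}^{t\Delta}$ is bounded — not merely $L^\infty$-contractive — on the weighted space, and arranging the iteration so that the failure of strong continuity of ${\rm e}^{t\Delta}$ at $t=0$ on $C_\nu$ causes no trouble. Once that is in place, the nonlinear estimates are purely algebraic, the smallness in $T$ is automatic, and the continuation and regularity arguments are routine.
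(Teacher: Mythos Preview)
Your approach is correct and essentially matches the paper's: the linear bound you sketch is exactly Lemma~\ref{Lnu} (proved there by the same splitting into $|x-\lambda z|\le|x|/2$ and its complement), the algebraic identity $\||\cdot|^\alpha|w|^p\|_\nu\le\|w\|_\nu^p$ and the treatment of the forcing term are identical, and the fixed-point/continuation/regularity structure is the same. The only minor differences are that the paper iterates directly on $u$ in $L^\infty([0,T];C_\nu)$ rather than subtracting ${\rm e}^{t\Delta}u_0$ (thus sidestepping the strong-continuity issue you flag), and obtains uniqueness from the Phragm\`en--Lindel\"of comparison principle (Lemma~\ref{CPL}) rather than a Gronwall argument.
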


Concerning blow-up, we suppose that $\z$ is given by \eqref{ze} and we separate the cases $m\leq 0$ and $m>0$ as stated below.

\begin{theorem}
\label{Blow}
Suppose $N\geq 3$, $\alpha>-2$, $m\leq 0$ and $1<p<\frac{N-2m+\alpha}{N-2m-2}$. Assume that $\z$ is given by \eqref{ze} and $\mathbf{w} \in C_0(\R^N)\cap L^1(\R^N)$ obeys $\int_{\R^N}\,{\mathbf w}(x)\,dx>0$. Then the Cauchy problem \eqref{main} has no global solutions.
\end{theorem}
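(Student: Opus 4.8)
The plan is to argue by contradiction, using the rescaled test‑function (nonlinear‑capacity) method with the \emph{parabolic} scaling $|x|\sim\sqrt t$ --- the scaling under which the large‑time profile $\z(t)\sim t^m$ behaves like a problem in the ``effective dimension'' $N-2m$, which is exactly what the exponent $\frac{N-2m+\alpha}{N-2m-2}$ reflects. Suppose $T^*=\infty$, so the solution $u$ given by Theorem \ref{WP1} (if $-2<\alpha\le0$) or Theorem \ref{WP2} (if $\alpha>0$) is classical on $(0,\infty)\times\R^N$. Fix $\eta\in C_c^\infty\big((\tfrac12,1)\big)$ with $0\le\eta\le1$ and $\eta\equiv1$ on $(\tfrac35,\tfrac45)$; fix $\chi\in C_c^\infty\big([0,1)\big)$ with $0\le\chi\le1$ and $\chi\equiv1$ on $[0,\tfrac12]$; fix an integer $\ell>2p'$, where $p'=\tfrac{p}{p-1}$; and for $R\ge\sqrt2$ set $\varphi_R(t,x)=\eta(t/R^2)^{\ell}\,\chi(|x|/R)^{\ell}$. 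Then $\varphi_R\in C_c^\infty\big((0,\infty)\times\R^N\big)$; since $\mathrm{supp}\,\varphi_R\subset\{t\ge R^2/2\}$ we have $\z(t)=t^m$ on the support of $\varphi_R$ and of all its derivatives, and since $\varphi_R(0,\cdot)\equiv0$ no initial‑data term appears. Testing \eqref{main} against $\varphi_R$ and integrating by parts (once in $t$, twice in $x$) gives
\[
\int_0^\infty\!\!\int_{\R^N}|x|^\alpha|u|^p\varphi_R\,dx\,dt+\int_0^\infty\!\!\int_{\R^N}\z(t)\,{\mathbf w}(x)\,\varphi_R\,dx\,dt=-\int_0^\infty\!\!\int_{\R^N}u\,\big(\partial_t\varphi_R+\Delta\varphi_R\big)\,dx\,dt .
\]

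Next I would estimate the right‑hand side by Young's inequality, $|u|\,|\partial_t\varphi_R+\Delta\varphi_R|\le\tfrac1p|x|^\alpha|u|^p\varphi_R+\tfrac1{p'}|x|^{-\alpha/(p-1)}\varphi_R^{-1/(p-1)}|\partial_t\varphi_R+\Delta\varphi_R|^{p'}$, absorb the first term into the nonlinear integral on the left, and discard the remaining nonnegative nonlinear integral, to obtain
\[
\int_0^\infty\!\!\int_{\R^N}\z(t)\,{\mathbf w}(x)\,\varphi_R\,dx\,dt\ \le\ C\int_0^\infty\!\!\int_{\R^N}|x|^{-\alpha/(p-1)}\,\varphi_R^{-1/(p-1)}\big(|\partial_t\varphi_R|^{p'}+|\Delta\varphi_R|^{p'}\big)\,dx\,dt=:C\,{\mathcal E}(R) .
\]
The choice $\ell>2p'$ makes the negative powers of $\eta$ and $\chi$ generated by $\varphi_R^{-1/(p-1)}$ harmless, so the integrand of ${\mathcal E}(R)$ is $\lesssim|x|^{-\alpha/(p-1)}R^{-2p'}$, supported where $\partial_t\varphi_R\ne0$ (a set contained in $\{t\simeq R^2,\ |x|<R\}$) or $\Delta\varphi_R\ne0$ (contained in $\{t\simeq R^2,\ R/2<|x|<R\}$); a change of variables then gives ${\mathcal E}(R)\lesssim R^{2}\cdot R^{-2p'}\cdot R^{\,N-\alpha/(p-1)}=R^{\,2+N-\frac{\alpha}{p-1}-2p'}$. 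Since $\varphi_R$ is a product, the left‑hand side factors as $\big(\int_0^\infty t^m\eta(t/R^2)^\ell dt\big)\big(\int_{\R^N}{\mathbf w}\,\chi(|x|/R)^\ell dx\big)$; by scaling, $\int_0^\infty t^m\eta(t/R^2)^\ell dt=R^{2(m+1)}\int_0^\infty s^m\eta(s)^\ell ds$, a positive multiple of $R^{2(m+1)}$ for \emph{every} $m\in\R$ (the case $m=-1$ simply producing a positive constant), while $\int_{\R^N}{\mathbf w}\,\chi(|x|/R)^\ell dx\to\int_{\R^N}{\mathbf w}>0$ by dominated convergence since ${\mathbf w}\in L^1(\R^N)$. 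Hence the left‑hand side is $\gtrsim R^{2(m+1)}$ for $R$ large.

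Combining the two estimates, $R^{2(m+1)}\lesssim R^{\,2+N-\frac{\alpha}{p-1}-2p'}$ for all large $R$, which is impossible as soon as $2(m+1)>2+N-\frac{\alpha}{p-1}-2p'$. Substituting $p'=\frac{p}{p-1}$ and clearing denominators, this inequality is equivalent to $(N-2m)(p-1)<2p+\alpha$, i.e.\ to $p<\frac{N-2m+\alpha}{N-2m-2}$, which is precisely the hypothesis. This contradiction shows that \eqref{main} has no global solution, so $T^*<\infty$ and, by Theorems \ref{WP1}--\ref{WP2}, the solution blows up in finite time.

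The step I expect to be the main obstacle is the control of the inhomogeneous weight $|x|^\alpha$ near the origin. For $\alpha\le0$ the dual weight $|x|^{-\alpha/(p-1)}$ appearing in ${\mathcal E}(R)$ is a \emph{nonnegative} power of $|x|$, locally integrable at $0$, and the scheme above runs verbatim --- this is the setting of Theorem \ref{WP1}. For $\alpha>0$, however, $|x|^{-\alpha/(p-1)}$ is singular at $x=0$, and since the forcing term requires $\chi(|x|/R)^\ell\approx1$ on $B_R$, the set $\mathrm{supp}\,\partial_t\varphi_R$ unavoidably reaches the origin, so ${\mathcal E}(R)$ is a priori finite only when $\frac{\alpha}{p-1}<N$; for $p$ near $1$ this can fail. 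There one must work genuinely inside the $C_\nu$‑framework of Theorem \ref{WP2} --- e.g.\ replacing $\varphi_R$ by a test function adapted to $\nu(x)=(1+|x|)^{\alpha/(p-1)}$, or using the built‑in decay $|u(t,x)|\lesssim(1+|x|)^{-\alpha/(p-1)}$ to estimate the near‑origin part of $\int\!\!\int u\,\partial_t\varphi_R$ directly rather than through Young's inequality. The remaining ingredients --- taking $\ell$ large, checking that $\eta(0)=0$ removes the $u_0$‑term, and the algebraic rearrangement of the critical exponent --- are routine.
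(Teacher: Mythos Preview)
Your proof is correct and follows essentially the same test-function (nonlinear-capacity) method as the paper: a parabolically scaled cutoff supported away from $t=0$ so that no $u_0$-term appears, Young's inequality to absorb the nonlinearity, and the identical exponent comparison (the paper's parameter $T$ is your $R^2$). The near-origin singularity of $|x|^{-\alpha/(p-1)}$ for $\alpha>0$, which you flag as the main obstacle, is in fact glossed over in the paper's ``Similarly'' at the $\partial_t\psi_T$ estimate \eqref{Y2}; your caution there is warranted.
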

\begin{remark}
If $m\leq 0$ and $N\geq 3$ then $N-2m-2>0$. The condition $1<p<\frac{N-2m+\alpha}{N-2m-2}$ implies that $\frac{N-2m+\alpha}{N-2m-2}>1$. Hence $\alpha>-2$.
\end{remark}
In the next theorem, we remove the restriction $N\geq 3$ for the dimension but only in the case $m>0$.
\begin{theorem}
\label{Bloww}
Suppose $N\geq 1$, $m> 0$ and $p>1$. Assume that $\z$ is given by \eqref{ze} and $\mathbf{w} \in C_0(\R^N)\cap L^1(\R^N)$ obeys $\int_{\R^N}\,{\mathbf w}(x)\,dx>0$. Then the Cauchy problem \eqref{main} has no global solutions.
\end{theorem}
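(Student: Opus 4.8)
\emph{Proof sketch.} I would argue by contradiction via the rescaled test--function method (nonlinear capacity estimates) of Mitidieri--Pohozaev, in the spirit of \cite{JKS}, but with the spatial part of the test function kept \emph{fixed} and only the time scale $T$ tending to infinity. The underlying heuristic: when $m>0$ the forcing feeds the equation at a rate whose time--integral is of order $T^{m+1}$, while the error terms produced by the test function are only of order $T$; since $m+1>1$ this forces a contradiction, and no relation between $p$ and $N$, nor a lower bound on the dimension, is needed.

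Suppose \eqref{main} has a global solution $u$ on $[0,\infty)\times\R^N$. From Duhamel's formula \eqref{integral} and Fubini, $u$ is a weak solution: for every $T>2$ and every nonnegative $\varphi\in C^{2,1}([0,T]\times\R^N)$ with compact $x$--support and $\varphi(T,\cdot)\equiv 0$,
\[
\int_0^T\!\int_{\R^N}\bigl(|x|^\alpha|u|^p+\z(t)\,\mathbf w\bigr)\varphi\,dx\,dt+\int_{\R^N}u_0\,\varphi(0,\cdot)\,dx=-\int_0^T\!\int_{\R^N}u\,(\varphi_t+\Delta\varphi)\,dx\,dt .
\]
(Local integrability of $|x|^\alpha|u|^p$ on the compact $x$--support of $\varphi$ holds in each of the settings of Theorems \ref{WP1}--\ref{WP2}.) Since $\mathbf w\in L^1(\R^N)$ and $\int_{\R^N}\mathbf w>0$, choose $0<\delta<R_0$ and a cut--off $\Phi\in C_c^\infty(\R^N)$ with $0\le\Phi\le1$, $\Phi\equiv1$ on $\{\delta\le|x|\le R_0\}$, $\mathrm{supp}\,\Phi\subset\{\delta/2<|x|<2R_0\}$, such that $\xi:=\Phi^k$ (with $k$ to be taken large) still satisfies $\int_{\R^N}\mathbf w\,\xi>0$. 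Note $\mathrm{supp}\,\xi$ is compact and avoids the origin, so $|x|^{\pm\alpha}$ is bounded on it, and, for $k$ large, $\xi^{-1/(p-1)}|\Delta\xi|^{p'}$ is bounded, where $p':=p/(p-1)$. Fix a decreasing $\Psi\in C^\infty([0,\infty))$ with $0\le\Psi\le1$, $\Psi\equiv1$ on $[0,1/2]$, $\Psi>0$ on $[0,1)$, $\Psi(1)=0$, and for $T>2$ take $\varphi(t,x)=\eta_T(t)\,\xi(x)$ with $\eta_T(t)=\Psi(t/T)^{\ell}$, $\ell$ large; thus $\eta_T(0)=1$, $\eta_T(T)=0$.

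In the weak identity, bound $-u(\varphi_t+\Delta\varphi)\le|u|(|\varphi_t|+|\Delta\varphi|)$ and apply Young's inequality,
\[
|u|\,(|\varphi_t|+|\Delta\varphi|)\le\tfrac12|x|^\alpha|u|^p\varphi+C\,|x|^{-\frac{\alpha}{p-1}}\varphi^{-\frac1{p-1}}(|\varphi_t|+|\Delta\varphi|)^{p'} .
\]
Absorbing $\tfrac12\int\!\int|x|^\alpha|u|^p\varphi$ on the left and dropping the remaining nonnegative nonlinear term gives
\[
\Big(\int_0^T\z(t)\eta_T(t)\,dt\Big)\int_{\R^N}\mathbf w\,\xi\le\Big|\int_{\R^N}u_0\,\xi\Big|+C\int_0^T\!\int_{\R^N}|x|^{-\frac{\alpha}{p-1}}\varphi^{-\frac1{p-1}}(|\varphi_t|+|\Delta\varphi|)^{p'} .
\]
Since $\varphi=\eta_T\xi$ and $p'-\tfrac1{p-1}=1$, the last integral is the sum of a contribution from $|\varphi_t|$, of order $T^{1-p'}\int_{\R^N}|x|^{-\alpha/(p-1)}\xi$, and one from $|\Delta\xi|$, of order $T\int_{\R^N}|x|^{-\alpha/(p-1)}\xi^{-1/(p-1)}|\Delta\xi|^{p'}$; with the chosen $k,\ell$ both spatial integrals are finite constants depending only on $\delta,R_0$, so the right--hand side is $\le C_{\delta,R_0}(1+T)$ for $T\ge2$. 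On the other hand $\z(t)=t^m$ for $t\ge1$ by \eqref{ze}, hence $\int_0^T\z\eta_T\ge\Psi(3/4)^{\ell}\int_{T/2}^{3T/4}t^m\,dt\ge c_m T^{m+1}$ with $c_m>0$. As $\int_{\R^N}\mathbf w\,\xi>0$, this yields $c\,T^{m+1}\le C_{\delta,R_0}(1+T)$ for all large $T$, which is impossible since $m+1>1$. Therefore \eqref{main} has no global solution.

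The only genuinely delicate points are the ``soft'' ones: (i) verifying that the mild (resp.\ classical) solution of Theorem \ref{WP1} (resp.\ Theorem \ref{WP2}) satisfies the weak formulation used above — standard via Duhamel and Fubini, but to be done with care since $u_0$ need not be integrable and $|x|^\alpha$ is a weight; and (ii) placing $\mathrm{supp}\,\xi$ so that the weight $|x|^{-\alpha/(p-1)}$, singular at the origin when $\alpha>0$, is harmless — this uses $\mathbf w\in L^1$ to recover a strictly positive mass $\int_{\R^N}\mathbf w\,\xi$ on an annulus away from the origin. Everything else is bookkeeping; the whole mechanism rests on the single extra factor $T^{m+1}$ in the time--integral of $\z\,\eta_T$, which is why $m>0$ alone suffices, in contrast with Theorem \ref{Blow}, where parabolic scaling $R_0\sim\sqrt T$ is forced and produces the exponent condition $p<\frac{N-2m+\alpha}{N-2m-2}$.
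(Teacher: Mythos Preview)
Your proof is correct and follows essentially the same approach as the paper: the test--function method with the spatial cutoff held \emph{fixed} (independent of $T$) and only the time scale sent to infinity, so that the forcing contributes $\sim T^{m+1}$ while the error terms are $O(T)$, yielding the contradiction from $m>0$. The paper takes $\varphi_T(t,x)=f_T(t)\,g(\varepsilon|x|^2)$ with $f_T(0)=0$ (so the initial-data term is absent) and a spatial cutoff containing the origin; your annular spatial cutoff is a slightly more careful variant that sidesteps any integrability issue for $|x|^{-\alpha/(p-1)}$ near $x=0$ when $\alpha$ is large.
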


\begin{remark}\quad\\
\vspace{-0.4cm}
\begin{itemize}
\item[(i)] {\rm Unlike to \cite{JKS} where the critical exponent is given in term of $\sigma$, here the critical exponent depends only on $m$ which measures the behavior of $\z$ at infinity. Indeed, the behavior at $0$ and the fact that $\sigma>-1$ are needed only for the local existence.}
\item[(ii)] {\rm The method apply for more general $\z$ by assuming that $\z : (0,\infty)\to (0,\infty)$ is continuous and
\begin{eqnarray*}
 \z(t)&\sim&\; \left\{
\begin{array}{cllll}c_0\,t^\sigma
\quad&\mbox{as}&\quad t\to 0,\\\\ c_\infty\,t^m \quad
&\mbox{as}&\quad t\to\infty,
\end{array}
\right.
\end{eqnarray*}
where $c_0, c_\infty>0$, $\sigma>-1$ and $m\in\R$.}
\item[(iii)]{\rm A similar statement of Theorem \ref{Blow} for lower dimensions $N=1,2$ reads as follows:}\\
\noindent{\bf Theorem.} {\it Suppose $N=1,2$, $\alpha>-2$, $m<\frac{N}{2}-1$ and $1<p<\frac{N-2m+\alpha}{N-2m-2}$. Assume that $\z$ is given by \eqref{ze} and $\mathbf{w} \in C_0(\R^N)\cap L^1(\R^N)$ obeys $\int_{\R^N}\,{\mathbf w}(x)\,dx>0$. Then the Cauchy problem \eqref{main} has no global solutions.}
\end{itemize}
\end{remark}
Moving to the analysis of the global existence, our main result reads as follows.
\begin{theorem}\label{GE}
Let $N\geq 2$, $-2<\alpha<0$ and $\z$ be given by \eqref{zee} with $-1<\sigma<0$. Assume that $p\geq 1+ \frac{2+\alpha}{N-2(\sigma+1)}$ and set $\ell=\frac{N p_c}{N+2(\sigma+1) p_c}=\frac{N(p-1)}{2+\alpha+2(\sigma+1)(p-1)}$ where $p_c=\frac{N(p-1)}{2+\alpha}$.  Then for any $u_0\in L^{p_c}(\mathbb{R}^N)$ and ${\mathbf w}\in L^{\ell}(\mathbb{R}^N)$ with the property that $\|u_0\|_{L^{p_c}}+\|{\mathbf w}\|_{L^{\ell}}$ is sufficiently small,  Eq. \eqref{integral} admits a global-in-time solution $u$.
\end{theorem}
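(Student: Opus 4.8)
The plan is to produce the solution directly on all of $(0,\infty)$ by a Banach fixed point argument for the right-hand side of \eqref{integral}, working in a space adapted to the parabolic scaling $u(t,x)\mapsto \lambda^{\frac{2+\alpha}{p-1}}u(\lambda^2t,\lambda x)$ of the homogeneous equation \eqref{Fuj}. For that scaling the data space $L^{p_c}$ is invariant, while the exponent $\ell=\frac{Np_c}{N+2(\sigma+1)p_c}$ is precisely the one that makes the forcing $t^\sigma{\mathbf w}(x)$ with ${\mathbf w}\in L^\ell$ scale-consistent; moreover an elementary computation shows $p\geq 1+\frac{2+\alpha}{N-2(\sigma+1)}\Longleftrightarrow \ell\geq 1$, and this forces $p_c>1$ as well, so $L^\ell$ and $L^{p_c}$ are genuine Lebesgue spaces — note also that $1+\frac{2+\alpha}{N-2(\sigma+1)}$ is exactly the blow-up threshold of Theorem~\ref{Blow} with $m=\sigma$, so the result is sharp. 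Fix an auxiliary exponent $r$ with $p_c<r<\infty$ (its admissible range is discussed below), set $\delta:=\tfrac N2\bigl(\tfrac1{p_c}-\tfrac1r\bigr)\geq 0$, and take for $X$ the complete metric space
\[
X:=\Bigl\{u:\ \|u\|_X:=\sup_{t>0}\|u(t)\|_{L^{p_c}}+\sup_{t>0}t^{\delta}\|u(t)\|_{L^r}<\infty\Bigr\},
\]
with $\Phi(u)(t):={\rm e}^{t\Delta}u_0+\int_0^t{\rm e}^{(t-\tau)\Delta}\bigl(|\cdot|^\alpha|u(\tau)|^p\bigr)\,d\tau+\int_0^t{\rm e}^{(t-\tau)\Delta}\bigl(\tau^\sigma{\mathbf w}\bigr)\,d\tau$. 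I would show $\Phi$ maps a small closed ball of $X$ into itself and is a strict contraction there.

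The three pieces of $\Phi$ are estimated separately. First, the $L^{p_c}\!\to\!L^{p_c}$ and $L^{p_c}\!\to\!L^r$ smoothing bounds for ${\rm e}^{t\Delta}$ give $\|{\rm e}^{t\Delta}u_0\|_{L^{p_c}}\leq \|u_0\|_{L^{p_c}}$ and $\sup_{t>0}t^{\delta}\|{\rm e}^{t\Delta}u_0\|_{L^r}\leq C\|u_0\|_{L^{p_c}}$, so the linear evolution is in $X$ with norm $\lesssim\|u_0\|_{L^{p_c}}$. Next, for the nonlinear Duhamel term I would use the weighted smoothing estimate of Proposition~\ref{Key} (from \cite{BTW}) with source exponent $r/p\geq 1$; since $\bigl\||u(\tau)|^p\bigr\|_{L^{r/p}}=\|u(\tau)\|_{L^r}^p$,
\[
\Bigl\|{\rm e}^{(t-\tau)\Delta}\bigl(|\cdot|^\alpha|u(\tau)|^p\bigr)\Bigr\|_{L^q}\leq C\,(t-\tau)^{\frac{\alpha}{2}-\frac N2\bigl(\frac{p}{r}-\frac1q\bigr)}\,\tau^{-p\delta}\,\|u\|_X^p,\qquad q\in\{p_c,r\},
\]
and the ensuing time integrals $\int_0^t(t-\tau)^{\frac{\alpha}{2}-\frac N2(\frac{p}{r}-\frac1q)}\tau^{-p\delta}\,d\tau$ are Euler Beta integrals whose powers of $t$ are forced by scale invariance to be $0$ (for $q=p_c$) and $-\delta$ (for $q=r$), and which converge as soon as $r>p_c$ (integrability at $\tau=t$) and $p\delta<1$ (integrability at $\tau=0$). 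Finally, for the forcing term, the $L^\ell\!\to\!L^q$ smoothing estimate together with the substitution $\tau=ts$ yields
\[
\Bigl\|\int_0^t{\rm e}^{(t-\tau)\Delta}(\tau^\sigma{\mathbf w})\,d\tau\Bigr\|_{L^q}\leq C\,\|{\mathbf w}\|_{L^\ell}\,t^{\,\sigma+1-\frac N2(\frac1\ell-\frac1q)}\,B\bigl(\sigma+1,\,1-\tfrac N2(\tfrac1\ell-\tfrac1q)\bigr),\qquad q\in\{p_c,r\},
\]
where $\sigma>-1$ makes the Beta function finite and the choice of $\ell$ makes the powers of $t$ equal to $0$ and $-\delta$ respectively. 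Combining these, $\|\Phi(u)\|_X\leq C_1\bigl(\|u_0\|_{L^{p_c}}+\|{\mathbf w}\|_{L^\ell}\bigr)+C_2\|u\|_X^p$.

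For the Lipschitz estimate one repeats the second computation using $\bigl||a|^p-|b|^p\bigr|\leq p\bigl(|a|^{p-1}+|b|^{p-1}\bigr)|a-b|$ and Hölder's inequality, obtaining $\|\Phi(u)-\Phi(v)\|_X\leq C_2\bigl(\|u\|_X^{p-1}+\|v\|_X^{p-1}\bigr)\|u-v\|_X$ (the forcing term drops out). Hence, writing $\varepsilon:=\|u_0\|_{L^{p_c}}+\|{\mathbf w}\|_{L^\ell}$, if $\varepsilon$ is small enough that $C_1\varepsilon$ and $C_2(2C_1\varepsilon)^{p-1}$ are both $\leq \tfrac14$, the map $\Phi$ sends the closed ball $\{u\in X:\|u\|_X\leq 2C_1\varepsilon\}$ into itself and is a contraction on it; its unique fixed point is a global-in-time solution $u\in L^\infty((0,\infty);L^{p_c})$ of \eqref{integral} with $\sup_{t>0}t^{\delta}\|u(t)\|_{L^r}<\infty$.

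The main obstacle is not the fixed-point scheme but the verification that an admissible auxiliary exponent $r$ exists: one needs $r>p_c$ (for the Duhamel kernel near $\tau=t$), $p\delta<1$ and $\tfrac N2(\tfrac1\ell-\tfrac1q)<1$ for $q=p_c,r$ (for the two families of Beta integrals), the condition $r/p\geq 1$, and the remaining structural hypotheses of Proposition~\ref{Key} on the pair $(\alpha,r)$, all at once. Checking that the resulting interval of $r$ is nonempty is where the precise form of $p_c$ and $\ell$, the inequality $p\geq 1+\frac{2+\alpha}{N-2(\sigma+1)}$, and the restrictions $N\geq 2$, $-2<\alpha<0$, $-1<\sigma<0$ are genuinely used; once $r$ is fixed, everything above reduces to the standard critical-space machinery.
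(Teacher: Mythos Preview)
Your approach is essentially the paper's: a Banach fixed-point argument in a scale-invariant weighted $L^r$ space, using Proposition~\ref{Key} for the nonlinear Duhamel term and the $L^\ell\!\to\!L^q$ heat-kernel bound for the forcing, with the admissible range of the auxiliary exponent $r$ (which you correctly flag as the main obstacle) verified in the paper via Lemma~\ref{GEL}. The only organizational differences are that the paper splits off the case $p>\frac{N+\alpha}{N-2}$ (where one can take $r=p_c$ and work directly in $C_b((0,\infty);L^{p_c})$) and, in the remaining case, carries only the single weighted norm $\sup_{t>0}t^{\mu}\|u(t)\|_{L^r}$ rather than your combined $L^{p_c}$ plus $t^{-\delta}L^r$ norm.
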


The paper is organized as follows. In Section 2, we recall some preliminaries needed in the sequel such as smoothing effect for the heat semi-group. The third section is devoted to the local existence for \eqref{main}. In the fourth section, we will focus on nonexistence of global solutions by proving Theorem \ref{Blow} and Theorem \ref{Bloww}. Lastly, in Section 5 we present the proof of the global existence result stated in Theorem \ref{GE}. In all this paper, $C$ will be a positive constant which  may have different values at different places.


\section{Preliminaries}

Let ${\rm e}^{t\Delta}$ be the linear heat semi-group defined by ${\rm e}^{t\Delta}\,\varphi=G_t\star\varphi, t>0,$ where $G_t$ is the heat kernel given by
$$
G_t(x)=\left(4\pi t\right)^{-N/2}\,{\rm e}^{-\frac{|x|^2}{4t}},\;\;\;t>0,\;\;\;x\in\R^N.
$$
Let, for $\gamma\geq 0$, ${\mathbf S}_{\gamma}$ be defined as
\begin{equation}
\label{S}
{\mathbf S}_{\gamma}(t)\varphi={\rm e}^{t\Delta}\left(|\cdot|^{-\gamma}\varphi\right),\;\;\;t>0.
\end{equation}
To treat the nonlinear term in \eqref{main}, we use the following key estimate proved in \cite{BTW}.
\begin{proposition}
\label{Key}
Let $N\geq 1$ and $0< \gamma<N$. For $1<q_1, q_2\leq \infty$ such that
\begin{equation}
\label{q12}
\frac{1}{q_2}<\frac{\gamma}{N}+\frac{1}{q_1}<1,
\end{equation}
we have
\begin{equation}
\label{SS}
\|{\mathbf S}_{\gamma}(t)\varphi\|_{q_2}\leq\,C\,t^{-\frac{N}{2}\left(\frac{1}{q_1}-\frac{1}{q_2}\right)-\frac{\gamma}{2}}\,\|\varphi\|_{q_1},
\end{equation}
where $C>0$ is a constant depending only on $N, \gamma, q_1$ and $q_2$.
\end{proposition}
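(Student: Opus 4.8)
The plan is to establish \eqref{SS} by a two-step factorization of ${\mathbf S}_\gamma(t)$, preceded by a parabolic rescaling that pins down the exact power of $t$. First I would reduce to the case $t=1$. Writing ${\mathbf S}_\gamma(t)\varphi=G_t\star(|\cdot|^{-\gamma}\varphi)$ and exploiting the self-similarity $G_t(\sqrt t\,w)=t^{-N/2}G_1(w)$, the substitution $y=\sqrt t\,z$ yields the identity
$$\big({\mathbf S}_\gamma(t)\varphi\big)(\sqrt t\,x)=t^{-\gamma/2}\big({\mathbf S}_\gamma(1)\varphi_t\big)(x),\qquad \varphi_t:=\varphi(\sqrt t\,\cdot).$$
Taking $L^{q_2}$-norms and using the dilation rule $\|f(\sqrt t\,\cdot)\|_{q}=t^{-N/(2q)}\|f\|_{q}$ converts any fixed bound $\|{\mathbf S}_\gamma(1)\psi\|_{q_2}\le C\|\psi\|_{q_1}$ into precisely the claimed time factor $t^{-\frac N2(1/q_1-1/q_2)-\gamma/2}$. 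It therefore remains to prove the $t=1$ endpoint $\|{\mathbf S}_\gamma(1)\varphi\|_{q_2}\le C\|\varphi\|_{q_1}$ under the hypothesis \eqref{q12}.

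For the $t=1$ bound I would factor ${\mathbf S}_\gamma(1)$ as multiplication by $|\cdot|^{-\gamma}$ followed by convolution with $G_1$. The singular weight lies in the weak space $|\cdot|^{-\gamma}\in L^{N/\gamma,\infty}(\R^N)$, with finite quasinorm read off from its explicit distribution function. Hölder's inequality in Lorentz spaces then gives $|\cdot|^{-\gamma}\varphi\in L^{r,q_1}$ with $\tfrac1r=\tfrac\gamma N+\tfrac1{q_1}$, and the right half of \eqref{q12}, namely $\tfrac\gamma N+\tfrac1{q_1}<1$, is exactly what forces $r>1$ (equivalently, what makes the singularity at the origin integrable against $L^{q_1}$ data). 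For the convolution step, the left half of \eqref{q12}, namely $\tfrac1{q_2}<\tfrac1r$, gives $r<q_2$, so that $G_1\in\mathcal S(\R^N)$ and Young's inequality map $L^{r}$ into $L^{q_2}$.

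The one genuine technical point is that the multiplication step lands in the Lorentz space $L^{r,q_1}$ rather than the honest Lebesgue space $L^r$, since $|\cdot|^{-\gamma}$ is only weakly integrable; I therefore cannot merely quote Young's inequality to reach the strong space $L^{q_2}$. This matters especially when $q_1>q_2$ (a configuration \eqref{q12} does allow), where O'Neil's convolution inequality alone would deliver only the weaker $L^{q_2,q_1}$. I would close this gap by real interpolation: convolution with $G_1$ is bounded $L^{r_0}\to L^{q_2}$ and $L^{r_1}\to L^{q_2}$ for any $1<r_0<r<r_1<q_2$ (Young's inequality, valid since $r_i\le q_2$), and because $(L^{r_0},L^{r_1})_{\theta,q_1}=L^{r,q_1}$ while $(L^{q_2},L^{q_2})_{\theta,q_1}=L^{q_2}$ for the matching $\theta$, the interpolated operator maps $L^{r,q_1}\to L^{q_2}$ with a constant depending only on $N,\gamma,q_1,q_2$. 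Composing the two steps yields the $t=1$ endpoint, and the rescaling of the first paragraph then produces \eqref{SS}. I expect this interpolation upgrade, together with a careful treatment of the Lorentz–Hölder estimate in the extreme cases $q_1=\infty$ or $q_2=\infty$, to be the only delicate part; the scaling reduction and the individual Hölder and Young estimates are routine.
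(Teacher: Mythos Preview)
The paper does not prove Proposition~\ref{Key}; it simply quotes the estimate from \cite{BTW}, so there is no in-paper proof to compare your attempt against. That said, your proposal is a correct and standard route to \eqref{SS}. The parabolic rescaling identity you wrote is exact and reduces matters to the $t=1$ endpoint with precisely the claimed power of $t$. For the $t=1$ bound, the factorization into multiplication by $|\cdot|^{-\gamma}\in L^{N/\gamma,\infty}$ followed by convolution with $G_1$ is the natural decomposition; O'Neil's Lorentz--H\"older inequality indeed places $|\cdot|^{-\gamma}\varphi$ in $L^{r,q_1}$ with $1/r=\gamma/N+1/q_1$, and the right half of \eqref{q12} is exactly $r>1$ while the left half is exactly $r<q_2$. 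Your real-interpolation upgrade---interpolating between the Young bounds $L^{r_0}\to L^{q_2}$ and $L^{r_1}\to L^{q_2}$ with $1<r_0<r<r_1<q_2$ to obtain $L^{r,q_1}\to L^{q_2}$---is the right way to close the gap when $q_1>q_2$, and the identity $(L^{q_2},L^{q_2})_{\theta,q_1}=L^{q_2}$ holds for any $q_1$, so the endpoint cases $q_1=\infty$ or $q_2=\infty$ cause no trouble. In short, your argument is complete and would constitute a valid proof of the proposition the paper merely cites.
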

\begin{remark}\quad\\
\vspace{-0.4cm}
\begin{itemize}
\item[(i)] {\rm For $\gamma=0$, the estimate \eqref{SS} holds under the assumption $1\leq q_1\leq q_2\leq\infty$. This is unlike to the case $\gamma>0$, where \eqref{q12} enable us to take $q_2<q_1$.}
\item[(ii)] {\rm As pointed out in \cite{Tay}, we may take $\frac{1}{q_2}=\frac{\gamma}{N}+\frac{1}{q_1}$, $q_1<\infty$, $q_2<\infty$ in \eqref{q12}.}
    \end{itemize}
\end{remark}

The following Lemma will be useful in the proof of Theorem \ref{WP2}.
\begin{lemma}
\label{Lnu}
Let $\gamma, \kappa>0$. There exists a constant $C=C(\kappa, \gamma, N)>0$ such that, for all $x\in\R^N$ and $\lambda\in [0,\kappa]$, we have
\begin{equation}
\label{gl}
\int\limits_{\R^N}\,{\rm e}^{-|z|^2}\,\left(1+|x-\lambda z|\right)^{-\gamma}\,dz\leq C\left(1+|x|\right)^{-\gamma}.
\end{equation}
\end{lemma}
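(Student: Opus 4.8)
The plan is to reduce the estimate to an elementary pointwise inequality of Peetre type, after which the conclusion follows from the fact that the Gaussian has finite polynomial moments of every order. First I would establish the bound
\[
\left(1+|a+b|\right)^{-\gamma}\leq \left(1+|a|\right)^{-\gamma}\left(1+|b|\right)^{\gamma}\qquad\text{for all } a,b\in\R^N .
\]
This is immediate from the triangle inequality $|a|\leq |a+b|+|b|$, which gives $1+|a|\leq (1+|a+b|)(1+|b|)$, hence $1+|a+b|\geq (1+|a|)/(1+|b|)$; raising both sides to the power $-\gamma$ (and using $\gamma>0$) yields the claim with constant $1$.

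Next I would apply this with $a=x$ and $b=-\lambda z$. For every $\lambda\in[0,\kappa]$ this gives
\[
\left(1+|x-\lambda z|\right)^{-\gamma}\leq (1+|x|)^{-\gamma}\,(1+\lambda|z|)^{\gamma}\leq (1+|x|)^{-\gamma}\,(1+\kappa|z|)^{\gamma},
\]
where the last step uses $\lambda\leq\kappa$. Inserting this into the left-hand side of \eqref{gl} and pulling out the factor $(1+|x|)^{-\gamma}$, which is independent of $z$, we obtain
\[
\int_{\R^N}{\rm e}^{-|z|^2}\left(1+|x-\lambda z|\right)^{-\gamma}\,dz\;\leq\;(1+|x|)^{-\gamma}\int_{\R^N}{\rm e}^{-|z|^2}\,(1+\kappa|z|)^{\gamma}\,dz .
\]
It then suffices to note that the remaining integral is a finite constant $C=C(\kappa,\gamma,N)$: since ${\rm e}^{-|z|^2}$ decays faster than any polynomial, one may bound $(1+\kappa|z|)^{\gamma}\leq C_{\kappa,\gamma}\bigl(1+|z|^{\lceil\gamma\rceil}\bigr)$ and use that all Gaussian moments are finite. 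Setting $C:=\int_{\R^N}{\rm e}^{-|z|^2}(1+\kappa|z|)^{\gamma}\,dz$ completes the argument.

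There is no serious obstacle here; the only mildly delicate point is the uniformity in $\lambda\in[0,\kappa]$, which enters precisely through the trivial bound $\lambda|z|\leq\kappa|z|$ and is what forces the constant to depend on $\kappa$. An alternative, more pedestrian route is to split the domain of integration: on $\{|z|\leq |x|/(2\kappa)\}$ one has $|x-\lambda z|\geq |x|-\kappa|z|\geq |x|/2$, so $(1+|x-\lambda z|)^{-\gamma}\leq 2^{\gamma}(1+|x|)^{-\gamma}$ and the factor $\int_{\R^N}{\rm e}^{-|z|^2}\,dz$ is a constant; on $\{|z|> |x|/(2\kappa)\}$ one bounds $(1+|x-\lambda z|)^{-\gamma}\leq 1$ and invokes the Gaussian tail estimate $\int_{|z|>R}{\rm e}^{-|z|^2}\,dz\leq C_{N,\gamma}(1+R)^{-\gamma}$ with $R=|x|/(2\kappa)$, comparing $1+|x|/(2\kappa)$ with $1+|x|$ up to a constant depending on $\kappa$. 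The Peetre-inequality argument above is cleaner, so I would present that one.
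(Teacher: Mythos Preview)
Your proof is correct, and your primary argument via the Peetre-type inequality $(1+|a+b|)^{-\gamma}\leq (1+|a|)^{-\gamma}(1+|b|)^{\gamma}$ is genuinely different from---and shorter than---the paper's approach. The paper instead carries out essentially the domain-splitting argument you describe as your ``more pedestrian route'': it decomposes $\R^N$ into $\mathbf{B}=\{|x-\lambda z|>|x|/2\}$, where the weight is directly comparable to $(1+|x|)^{-\gamma}$, and $\mathbf{A}=\{|x-\lambda z|\leq |x|/2\}\subset\{|z|\geq |x|/(2\lambda)\}$, where it bounds the weight by $1$ and controls the Gaussian tail by ${\rm e}^{-|x|^2/(8\kappa^2)}\lesssim (1+|x|)^{-\gamma}$. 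Your Peetre argument avoids the split entirely at the cost of needing the finiteness of $\int_{\R^N}{\rm e}^{-|z|^2}(1+\kappa|z|)^{\gamma}\,dz$, which is of course harmless; it also makes the dependence of the constant on $\kappa$ and $\gamma$ completely explicit. The paper's splitting, on the other hand, is the approach that generalizes more readily if the Gaussian is replaced by a kernel with only limited (rather than all) polynomial moments.
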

\begin{proof}
Define
\begin{eqnarray*}
{\mathbf A}&=&\left\{\,z\in\R^N;\;\; |x-\lambda z|\leq \frac{|x|}{2}\;\right\},\\
{\mathbf B}&=&\left\{\,z\in\R^N;\;\; |x-\lambda z|> \frac{|x|}{2}\;\right\}.
\end{eqnarray*}
Clearly $\left(1+|x-\lambda z|\right)^{-\gamma}\leq 2^{\gamma}\,\left(1+|x|\right)^{-\gamma}$ for $z\in {\mathbf B}$. Hence
\begin{equation}
\label{Est1}
\int\limits_{{\mathbf B}}\,{\rm e}^{-|z|^2}\,\left(1+|x-\lambda z|\right)^{-\gamma}\,dz\leq 2^{\gamma} \pi^{N/2}\,\left(1+|x|\right)^{-\gamma}.
\end{equation}
For $z\in {\mathbf A}$ we have $|z|\geq \frac{|x|}{2\lambda}$. It follows that
\begin{eqnarray*}
\int\limits_{{\mathbf A}}\,{\rm e}^{-|z|^2}\,\left(1+|x-\lambda z|\right)^{-\gamma}\,dz&\leq&\int\limits_{\{|z|\geq \frac{|x|}{2\lambda}\}}\,{\rm e}^{-|z|^2}\,dz\\
&\leq&|{\mathcal S}^{N-1}|\,\int\limits_{\frac{|x|}{2\lambda}}^\infty\,{\rm e}^{-r^2}\,r^{N-1}\,dr\\
&\leq&|{\mathcal S}^{N-1}|\left(\int\limits_0^\infty\,{\rm e}^{-\frac{r^2}{2}}\,r^{N-1}\,dr\right)\,{\rm e}^{-\frac{|x|^2}{8\lambda^2}}.
\end{eqnarray*}
Observe that when $0\leq\lambda\leq\kappa$,
$$
{\rm e}^{-\frac{|x|^2}{8\lambda^2}}\leq {\rm e}^{-\frac{|x|^2}{8\kappa^2}}\leq C(\kappa,\gamma)\, \left(1+|x|\right)^{-\gamma}.
$$
Hence
\begin{equation}
\label{Est2}
\int\limits_{{\mathbf A}}\,{\rm e}^{-|z|^2}\,\left(1+|x-\lambda z|\right)^{-\gamma}\,dz\,\leq\, C(\kappa,\gamma,N)\,\left(1+|x|\right)^{-\gamma}.
\end{equation}
Combining \eqref{Est1} and \eqref{Est2}, we obtain \eqref{gl} as desired. \qed\medskip
\end{proof}

We also recall the following singular Gronwall inequality proved in \cite{DM1986}.
\begin{proposition}
\label{SGI}
Let $\psi : [0,T]\to [0,\infty)$ be a continuous function satisfying
\begin{equation}
\label{SGI1}
\psi(t)\leq A+M\int\limits_0^t\,\frac{\psi(\tau)}{(t-\tau)^{\theta}}\,d\tau, \quad 0\leq t\leq T,
\end{equation}
where $0\leq \theta<1$ and $A, M\geq 0$ are two constants. Then
\begin{equation}
\label{SGI2}
\psi(t)\leq A\, {\mathcal E}_{1-\theta}\left(M \Gamma(1-\theta)\,t^{1-\theta}\right),\quad 0\leq t\leq T,
\end{equation}
where ${\mathcal E}_{\varrho}$ is the {\tt Mittag-Leffler} function defined for all $\varrho>0$ by
$$
{\mathcal E}_{\varrho}(z)=\sum_{n=0}^{\infty}\,\frac{z^n}{\Gamma(n\varrho+1)}.
$$
\end{proposition}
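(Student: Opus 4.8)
The plan is to prove the inequality by Picard-type iteration of the linear integral operator on the right-hand side of \eqref{SGI1}, together with an explicit evaluation of its iterates through the Euler Beta integral. Set $\varrho=1-\theta\in(0,1]$ and introduce the positivity-preserving operator
\[
(B\phi)(t)=M\int_0^t (t-\tau)^{-\theta}\,\phi(\tau)\,d\tau,
\]
acting on nonnegative continuous functions on $[0,T]$. The hypothesis \eqref{SGI1} reads $\psi\le A+B\psi$. Since the kernel $(t-\tau)^{-\theta}$ is nonnegative, $B$ is monotone, so iterating this inequality $n$ times yields
\[
\psi(t)\le A\sum_{k=0}^{n-1}(B^k 1)(t)+(B^n\psi)(t),\qquad 0\le t\le T,
\]
where $1$ denotes the constant function and $B^k$ the $k$-fold composition. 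The proof then reduces to three points: (a) evaluating $B^k 1$ explicitly, (b) recognizing the resulting series as the Mittag-Leffler function, and (c) showing the remainder $B^n\psi$ vanishes as $n\to\infty$.

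The key computation is the action of $B$ on a pure power. Using the substitution $\tau=ts$ and the Beta identity $\int_0^1(1-s)^{-\theta}s^{\beta}\,ds=\Gamma(\beta+1)\Gamma(\varrho)/\Gamma(\beta+1+\varrho)$, one finds
\[
B(t^{\beta})=M\,\Gamma(\varrho)\,\frac{\Gamma(\beta+1)}{\Gamma(\beta+1+\varrho)}\,t^{\beta+\varrho}.
\]
Feeding this into an induction on $k$ produces a telescoping product: starting from $B^0 1=t^0$, the Gamma ratios $\Gamma(k\varrho+1)/\Gamma((k+1)\varrho+1)$ cancel across consecutive factors, leaving
\[
(B^k 1)(t)=\frac{\bigl(M\Gamma(\varrho)\bigr)^k}{\Gamma(k\varrho+1)}\,t^{k\varrho}.
\]
Summing over $k$ and invoking the definition of $\mathcal{E}_\varrho$ identifies $\sum_{k\ge 0}A\,(B^k 1)(t)$ with $A\,\mathcal{E}_{1-\theta}\bigl(M\Gamma(1-\theta)\,t^{1-\theta}\bigr)$, which is exactly the right-hand side of \eqref{SGI2}.

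It remains to control the remainder. As $\psi$ is continuous on the compact interval $[0,T]$, it is bounded by some $\Psi\ge 0$; monotonicity of $B$ and the formula for $B^k 1$ give
\[
0\le (B^n\psi)(t)\le \Psi\,(B^n 1)(t)=\Psi\,\frac{\bigl(M\Gamma(\varrho)\,t^{\varrho}\bigr)^n}{\Gamma(n\varrho+1)}\le \Psi\,\frac{\bigl(M\Gamma(\varrho)\,T^{\varrho}\bigr)^n}{\Gamma(n\varrho+1)}.
\]
Because $\varrho>0$, the Mittag-Leffler series converges for every argument, so its general term tends to zero; hence the right-hand side goes to $0$ uniformly in $t\in[0,T]$ as $n\to\infty$. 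Passing to the limit $n\to\infty$ in the iterated inequality then yields \eqref{SGI2}.

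I expect the main obstacle to be the bookkeeping in step (a)–(b): correctly evaluating the Beta integral under the parameter shift $\theta\mapsto\varrho=1-\theta$, and verifying that the Gamma factors telescope to the single denominator $\Gamma(k\varrho+1)$, so that the partial sums assemble \emph{exactly} into $\mathcal{E}_{1-\theta}$ rather than into a series carrying spurious constants. The factorial-type growth of $\Gamma(n\varrho+1)$ is what simultaneously renders the limiting sum finite and forces the remainder to vanish, so these two facts are best verified together.
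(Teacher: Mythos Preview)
The paper does not prove this proposition; it merely recalls it as a known result from \cite{DM1986} (Dixon and McKee). Your proof is correct and is in fact the classical Picard-iteration argument underlying that reference: iterate the monotone integral operator, evaluate its action on powers via the Beta integral to obtain $(B^k 1)(t)=\bigl(M\Gamma(1-\theta)\bigr)^k t^{k(1-\theta)}/\Gamma(k(1-\theta)+1)$, sum to the Mittag-Leffler series, and use the same formula with the bound $\|\psi\|_\infty$ to kill the remainder. There is nothing to compare against in the paper itself, and your argument stands on its own.
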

Finally, we recall a comparison principle of Phragm\`en-Lindel\"of type for \eqref{main}. See for instance \cite[Lemma 1.3, p. 559]{Wang}. In our context, $f(t,x,u)=|x|^\alpha |u|^p+\z(t){\mathbf w}(x)$.
\begin{lemma}
\label{CPL}
Suppose $\overline{u}$ and $\underline{u}$ are continuous weak upper and lower solutions of \eqref{main} and $(\overline{u}-\underline{u})(t,x)\geq -B \exp(\beta |x|^2)$ on $\R^N$ with $B, \beta>0$. Suppose that
\begin{equation}
\label{CPL1}
|x|^\alpha \bigg[|\overline{u}(t,x)|^p-|\underline{u}(t,x)|^p\bigg]\geq \mathsf{C}(t,x)\bigg(\overline{u}-\underline{u}\bigg)(t,x),
\end{equation}
where $\mathsf{C}$ is continuous and $\mathsf{C}(t,x)\leq C_0 \left(1+|x|^2\right)$ for some $C_0>0$. Then $\overline{u}\geq \underline{u}$ on $\R^N$.
\end{lemma}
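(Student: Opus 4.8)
The plan is to reduce Lemma \ref{CPL} to a weak minimum principle for a single linear parabolic inequality and to neutralise the unbounded, sign-indefinite potential $\mathsf C$ by a Tychonoff-type comparison function. First I set $w=\overline u-\underline u$. Subtracting the differential inequalities that define a weak upper and a weak lower solution of \eqref{main}---the forcing $\z(t)\mathbf w(x)$ is independent of $u$ and cancels---and invoking \eqref{CPL1}, I obtain, weakly,
\[
w_t-\Delta w \ \ge\ |x|^\alpha\big(|\overline u|^p-|\underline u|^p\big)\ \ge\ \mathsf C(t,x)\,w .
\]
Thus $w$ is a weak supersolution of $Lv:=v_t-\Delta v-\mathsf C v$, with $w(0,\cdot)\ge0$ (the initial ordering contained in the definition of upper/lower solution), the one-sided growth bound $w(t,x)\ge -B e^{\beta|x|^2}$, and $\mathsf C(t,x)\le C_0(1+|x|^2)$. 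The whole lemma now reduces to the implication that these four facts force $w\ge0$.

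Next I construct the comparison function. For $t<1/b$ put
\[
\Phi(t,x)=\exp\!\Big(\frac{a|x|^2}{1-bt}+ct\Big),\qquad a,b,c>0 ,
\]
so that $L\Phi/\Phi=\frac{a(b-4a)}{(1-bt)^2}\,|x|^2+c-\frac{2aN}{1-bt}-\mathsf C$. Restricting to $t\in[0,\tau]$ with $\tau=1/(2b)$, so that $1-bt\in[\tfrac12,1]$, and using $\mathsf C\le C_0(1+|x|^2)$, the coefficient of $|x|^2$ is at least $a(b-4a)-C_0$ and the remainder is at least $c-4aN-C_0$. I therefore fix $a>\beta$, then $b\ge 4a+C_0/a$, then $c\ge 4aN+C_0$; this makes $L\Phi\ge0$ on $[0,\tau]\times\R^N$, while $\Phi(t,x)\ge e^{a|x|^2}$ grows strictly faster than the admissible rate $e^{\beta|x|^2}$.

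The division trick then flips the sign of the zeroth-order term. Writing $z=w/\Phi$, the inequality $Lw\ge0$ becomes
\[
z_t-\Delta z-2\frac{\nabla\Phi}{\Phi}\cdot\nabla z+\frac{L\Phi}{\Phi}\,z\ \ge\ 0 ,
\]
whose zeroth-order coefficient $L\Phi/\Phi\ge0$ now has the sign favourable to a minimum principle, and whose drift and coefficient are bounded on every cylinder $[0,\tau]\times B_\rho$. The weak minimum principle for supersolutions with nonnegative zeroth-order coefficient gives $z\ge\min\big(0,\min_{\partial_{\mathrm{par}}}z\big)$ on $[0,\tau]\times B_\rho$. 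On the initial slice $z(0,\cdot)=w(0,\cdot)/\Phi(0,\cdot)\ge0$, while on $\{|x|=\rho\}$ the growth bound yields $z\ge -B e^{(\beta-a)\rho^2}\to0$ as $\rho\to\infty$ because $a>\beta$. Fixing $(t,x)$ and letting $\rho\to\infty$ gives $z(t,x)\ge0$, i.e.\ $w\ge0$ on $[0,\tau]\times\R^N$. Since $\tau$ depends only on $\beta,C_0,N$ and the growth bound holds on the entire time interval, I restart at $t=\tau$ with $w(\tau,\cdot)\ge0$ and a time-shifted copy of $\Phi$; finitely many such steps cover any $[0,T]$ and yield $\overline u\ge\underline u$.

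The main obstacle is precisely that $\mathsf C$ is both unbounded (of order $|x|^2$) and of indefinite sign, so neither the bare maximum principle nor the standard $e^{-\lambda t}$ normalisation applies. The decisive point is the quantitative balancing in the construction of $\Phi$: the Gaussian rate $a$ must be taken strictly above the admissible growth rate $\beta$ of $w$ and yet large enough, through the choice of $b$, to absorb the quadratic potential, so that dividing by $\Phi$ simultaneously produces the favourable zeroth-order sign and forces $w/\Phi$ to decay at spatial infinity. A secondary technical issue is that $\overline u,\underline u$ are only continuous weak sub/supersolutions; I would run the minimum principle through the integral (weak) formulation with Stampacchia truncation on the set $\{z<0\}$ rather than by evaluating derivatives at an interior extremum.
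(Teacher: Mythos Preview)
The paper does not prove Lemma~\ref{CPL} at all: it merely states the result and refers the reader to \cite[Lemma~1.3, p.~559]{Wang} for the proof. Your argument is the classical Phragm\`en--Lindel\"of/Tychonoff construction that underlies that reference, and it is correct: you linearise by subtraction, absorb the quadratically growing sign-indefinite potential $\mathsf C$ into a Gaussian barrier $\Phi$ with carefully tuned parameters $a>\beta$, $b\ge 4a+C_0/a$, $c\ge 4aN+C_0$, divide to obtain a favourable zeroth-order sign, apply the weak minimum principle on finite cylinders, send $\rho\to\infty$, and iterate in time. The computation of $L\Phi/\Phi$ and the choice of parameters check out, and your remark about handling the weak formulation via Stampacchia truncation on $\{z<0\}$ is the appropriate substitute for pointwise evaluation at an extremum when $\overline u,\underline u$ are merely continuous weak sub/supersolutions.

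Since the paper provides no proof of its own, there is nothing to compare your route against beyond noting that it is precisely the standard argument one would reconstruct from the cited source.
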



\section{Local well-posedness}

First we investigate the case $-2<\alpha\leq 0$ as stated in  Theorem \ref{WP1}.
\begin{proof}
We first prove the unconditional uniqueness. Let $T>0$ and $u, v$ be two $C_B-$mild solutions of \eqref{main}. Owing to \eqref{integral} and \eqref{SS}, we infer
\begin{equation*}
\|u(t)-v(t)\|_{L^\infty}\leq C\int\limits_0^t\,\left(t-\tau\right)^{\alpha/2}\,\|u(\tau)-v(\tau)\|_{L^\infty}\left(\|u(\tau)\|_{L^\infty}^{p-1}+\|v(\tau)\|_{L^\infty}^{p-1}\right)\,
d\tau.
\end{equation*}
This together with the singular Gronwall inequality (see Proposition \ref{SGI}) imply that $u=v$ on $[0,T]\times\R^N$. We turn now to the existence part. We use a fixed point argument. We introduce, for any $T, M>0$  the following complete metric space
$$
{\mathbf X}_{T, M}=\left\{\, u\in C_B([0,T]\times\R^N);\;\;\; \|u\|_T\leq M\,\right\},
$$
where $\|u\|_T=\displaystyle\sup_{0\leq t\leq T}\,\|u(t)\|_{L^\infty(\R^N)}$. Set
\begin{equation}
\label{Phi}
\Phi(u)(t)={\rm e}^{t\Delta}u_{0}+\int\limits_{0}^{t}{\rm e}^{(t-\tau)\Delta}\,\left(|\cdot|^{\alpha} |u(\tau)|^{p}\right)\,d\tau+\int\limits_{0}^{t}{\rm e}^{(t-\tau)\Delta}\,\left(\z(\tau)\,{\mathbf w}\right)\,d\tau.
\end{equation}
We will prove that the parameters  $T, M>0$ can be chosen so that  $\Phi$ is a contraction map from ${\mathbf X}_{T, M}$ into itself. Without loss of generality, we may assume that $T\leq 1$. Let $u\in {\mathbf X}_{T, M}$. Noticing that $-N\leq -3<-2<\alpha\leq 0$, $\sigma>-1$, and owing to \eqref{SS} and \eqref{ze}, we obtain that
\begin{eqnarray}
\label{Cont1}
\|\Phi(u)(t)\|_{L^\infty}&\leq& \|u_0\|_{L^\infty}+C\int\limits_0^t\, (t-\tau)^{\alpha/2}\, \|u(\tau)\|_{L^\infty}^p\,d\tau+\frac{t^{\sigma+1}}{\sigma+1}\|{\mathbf w}\|_{L^\infty},\\
\nonumber
&\leq& \|u_0\|_{L^\infty}+CM^p\frac{T^{1+\alpha/2}}{1+\alpha/2}+\frac{T^{\sigma+1}}{\sigma+1}\|{\mathbf w}\|_{L^\infty}.
\end{eqnarray}
Taking $M>\|u_0\|_{L^\infty}$ and choosing $T>0$ small enough, we easily deduce that $\Phi({\mathbf X}_{T,M})\subset {\mathbf X}_{T,M}$.
To show that $\Phi$ is a contraction we compute, for $u,v \in {\mathbf X}_{T,M}$,
\begin{eqnarray}
\label{Cont2}
\|\Phi(u)(t)-\Phi(v)(t)\|_{L^\infty}&\leq&C\int\limits_0^t\, (t-\tau)^{\alpha/2}\, \||u(\tau)|^p-|v(\tau)|^p\|_{L^\infty}\,d\tau\\
\nonumber
&\leq&C M^{p-1}\int\limits_0^t\, (t-\tau)^{\alpha/2}\, \|u(\tau)-v(\tau)\|_{L^\infty}\,d\tau\\
\nonumber
&\leq& CM^{p-1} T^{1+{\alpha/2}}\, \|u-v\|_T,
\end{eqnarray}
where we have used
$$
\left| |a|^p-|b|^p\right|\lesssim |a-b| \left(|a|^{p-1}+|b|^{p-1}\right).
$$
It follows that
\begin{equation}
\label{Cont3}
\|\Phi(u)-\Phi(v)\|_T\,\leq\,CM^{p-1} T^{1+{\alpha/2}}\, \|u-v\|_T.
\end{equation}
From \eqref{Cont3} we conclude that $\Phi$ is a contraction for $T>0$ sufficiently small. This finishes the existence part. The blowup criterion can be shown in a standard way by taking advantage of the fact that the local time of existence depends on $\|u_0\|_{L^\infty}$ for the choice $M=2\|u_0\|_{L^\infty}$.
Finally, since ${\rm e}^{t\Delta}$ preserve the positivity, we easily deduce that $u\geq 0$ provided that $u_0, {\mathbf w}\geq 0$. \qed\medskip
\end{proof}

Next, we turn to the case $\alpha>0$.

\begin{proof}
The proof of local existence follows from standard fixed point argument in a suitable complete metric space. To this end, we introduce
$$
{\mathbf Y}_{T, M}=\left\{\, u\in L^\infty([0,T]; {C}_\nu(\R^N));\;\;\; \sup_{0\leq t\leq T}\, \|u(t)\|_\nu\leq M\,\right\}.
$$
endowed with the distance
$$
d(u,v)=\|u-v\|_T=\sup_{0\leq t\leq T}\, \|u(t)-v(t)\|_\nu.
$$
Here $M>0$ and $0<T\leq 1$ to be fixed later. Define
\begin{eqnarray*}
{\mathbf I}(t)&=&{\rm e}^{t\Delta}u_{0},\\
{\mathbf J}(t)&=&\int\limits_{0}^{t}{\rm e}^{(t-\tau)\Delta}\,\left(|\cdot|^{\alpha} |u(\tau)|^{p}\right)\,d\tau,\\
{\mathbf K}(t)&=& \int\limits_{0}^{t}{\rm e}^{(t-\tau)\Delta}\,\left(\z(\tau)\,{\mathbf w}\right)\,d\tau,
\end{eqnarray*}
so that $\Phi(u)(t)={\mathbf I}(t)+{\mathbf J}(t)+{\mathbf K}(t)$ where $\Phi$ is given as in \eqref{Phi}. We will estimate separately the terms ${\mathbf I}(t)$, ${\mathbf J}(t)$ and ${\mathbf K}(t)$. First we compute
\begin{eqnarray*}
{\mathbf I}(t)(x)&=&\left(4\pi t\right)^{-{N/2}}\, \int\limits_{\R^N}\,{\rm e}^{-\frac{|x-y|^2}{4t}}\, u_0(y)\,dy,\\
&\leq &\|u_0\|_\nu\, \left(4\pi t\right)^{-{N/2}}\, \int\limits_{\R^N}\,{\rm e}^{-\frac{|x-y|^2}{4t}}\, \nu^{-1}(y)\,dy,\\
&=&\pi^{-{N/2}} \|u_0\|_\nu\, \int\limits_{\R^N}\,{\rm e}^{-|z|^2}\, \left(1+|x-2\sqrt{t}z|\right)^{-\frac{\alpha}{p-1}}\,dy.
\end{eqnarray*}
By \eqref{gl} with $\gamma=\frac{\alpha}{p-1}$, one obtains
\begin{equation}
\label{I}
\|{\mathbf I}(t)\|_\nu\leq C\|u_0\|_\nu,
\end{equation}
where $C$ is a positive constant depending only on $\alpha,\,p,\, N$.

Next, we compute (for $u\in {\mathbf Y}_{T,M}$)
\begin{eqnarray*}
{\mathbf J}(t)(x)&=&\int\limits_0^t\,\int\limits_{\R^N}\,\left(4\pi (t-\tau)\right)^{-{N/2}}\,{\rm e}^{-\frac{|x-y|^2}{4(t-\tau)}}|y|^{\alpha}\,|u(\tau,y)|^p\,dy\,d\tau,\\
&\leq&M^p\, \int\limits_0^t\,\int\limits_{\R^N}\,\left(4\pi (t-\tau)\right)^{-{N/2}}\,{\rm e}^{-\frac{|x-y|^2}{4(t-\tau)}}|y|^{\alpha}\,\nu(y)^{-p}\,dy\,d\tau,\\
&\leq& M^p\,\int\limits_0^t\,\left(\int\limits_{\R^N}\,\left(4\pi (t-\tau)\right)^{-{N/2}}\,{\rm e}^{-\frac{|x-y|^2}{4(t-\tau)}}|y|^{\alpha}\,(1+|y|)^{-\frac{\alpha}{p-1}}\,dy\right)\,d\tau,\\
&\leq& M^p\,\int\limits_0^t\, C \nu^{-1}(x)\, d\tau,
\end{eqnarray*}
where we have used \eqref{gl} in the last inequality. Therefore
\begin{equation}
\label{J}
\|{\mathbf J}(t)\|_\nu\leq C T M^P,
\end{equation}
where $C$ is a positive constant depending only on $\alpha,\,p,\, N$.

Let us now estimate the third term ${\mathbf K}$. Using again \eqref{gl} together with \eqref{ze}, we get
\begin{eqnarray*}
{\mathbf K}(t)(x)&=&\int\limits_0^t\,\int\limits_{\R^N}\,\left(4\pi (t-\tau)\right)^{-{N/2}}\,{\rm e}^{-\frac{|x-y|^2}{4(t-\tau)}}\z(\tau){\mathbf w}(y)\,dy\,d\tau,\\
&\leq&\|{\mathbf w}\|_\nu\, \int\limits_0^t\,\tau^{\sigma}\,\left(\int\limits_{\R^N}\,\left(4\pi (t-\tau)\right)^{-{N/2}}\,{\rm e}^{-\frac{|x-y|^2}{4(t-\tau)}}\,\nu^{-1}(y)\,dy\right)\,d\tau,\\
&\leq& C\frac{t^{1+\sigma}}{1+\sigma}\,\|{\mathbf w}\|_\nu\, \nu^{-1}(x).
\end{eqnarray*}
It follows that
\begin{equation}
\label{K}
\|{\mathbf K}(t)\|_\nu\leq C T^{\sigma+1}\|{\mathbf w}\|_\nu,
\end{equation}
where $C$ is a positive constant depending only on $\sigma,\,\alpha,\,p,\, N$.

Combining \eqref{I}-\eqref{J}-\eqref{K}, we end up with
\begin{equation}
\label{Stab}
\|\Phi(u)\|_T\leq C\|u_0\|_\nu+CTM^p+C T^{\sigma+1}\|{\mathbf w}\|_\nu.
\end{equation}
Choosing $M>C\|u_0\|_\nu$ and $T$ sufficiently small such that $C\|u_0\|_\nu+CTM^p+C T^{\sigma+1}\|{\mathbf w}\|_\nu\leq M$, we see that $\Phi({\mathbf Y}_{T,M})\subset {\mathbf Y}_{T,M}$.

Now we show that $\Phi$ is contractive. Let $u,v \in {\mathbf Y}_{T,M}$. Arguing as above, we obtain that
\begin{equation}
\label{Cont4}
\|\Phi(u)-\Phi(v)\|_T\leq CT M^{p-1}\, \|u-v\|_T\leq \frac{1}{2}\, \|u-v\|_T,
\end{equation}
for $M>C\|u_0\|_\nu$ and $T$ sufficiently small.  This enable us to conclude the proof of the existence part.

The uniqueness part follows easily from Lemma \ref{CPL}.

Finally, let us turn to regularity. Since $\z$ is continuous,  ${\mathbf w}\in {C}_{\nu}\subset {C}_B$ and $u\in L^\infty((0,T), {C}_B)$, standard regularity results for parabolic equations  (\cite[Appendix B]{QS} and \cite{LSU}) guarantee that $u$ is a classical solution. \qed\medskip
\end{proof}


\section{Nonexistence of global solutions}
We will focus in this section on blow-up results stated in Theorems \ref{Blow} - \ref{Bloww}.
\begin{proof}
Let $u$ be the maximal solution defined on $[0,T^*)\times\R^N$ and suppose that $T^*=\infty$. In order to obtain a contradiction we use the so-called test function method. See for instance \cite{BLZ, MP}. Let's choose two cut-off functions $f,g\in C^\infty([0,\infty)$ such that $0\leq f,g\leq 1$,
\begin{eqnarray}
\label{f}
 f(\tau)&=&\; \left\{
\begin{array}{cllll}1
\quad&\mbox{if}&\quad {1/2}\leq \tau\leq {2/3},\\\\ 0 \quad
&\mbox{if}&\quad \tau\in [0,{1/4}]\cup [{3/4},\infty),
\end{array}
\right.
\end{eqnarray}
and
\begin{eqnarray}
\label{g}
 g(\tau)&=&\; \left\{
\begin{array}{cllll}1
\quad&\mbox{if}&\quad 0\leq \tau\leq 1,\\\\ 0 \quad
&\mbox{if}&\quad \tau\geq 2.
\end{array}
\right.
\end{eqnarray}
For $T>0$, we introduce $\psi^{}_T(t,x)=f^{}_T(t)\,g^{}_T(x)$, where
\begin{eqnarray*}
f^{}_T(t)&=&\left(f\left(\frac{t}{T}\right)\right)^{\frac{p}{p-1}},\\\\
g^{}_T(x)&=&\left(g\left(\frac{|x|^2}{T}\right)\right)^{\frac{2p}{p-1}}.
\end{eqnarray*}
Multiplying both sides of the differential equation in \eqref{main} by $\psi^{}_T$ and integrating over $(0,T)\times\R^N$ we find
\begin{eqnarray}
\nonumber
&&\int\limits_0^T\,\int\limits_{\R^N}\,|x|^\alpha\,|u|^p\,\psi^{}_T\,dx\,dt+
\int\limits_0^T\,\int\limits_{\R^N}\,\z(t){\mathbf w}(x)\,\psi^{}_T\,dx\,dt+\int\limits_{\R^N}\,u_0(x)\psi^{}_T(0,x)\,dx\\
\label{psiT}
&=&-\int\limits_0^T\,\int\limits_{\R^N}\,u\,\Delta\psi^{}_T\,dx\,dt
-\int\limits_0^T\,\int\limits_{\R^N}\,u\,\partial_t\psi^{}_T\,dx\,dt,\\
\nonumber
&\leq&\int\limits_0^T\,\int\limits_{\R^N}\,|u|\,|\Delta\psi^{}_T|\,dx\,dt
+\int\limits_0^T\,\int\limits_{\R^N}\,|u|\,|\partial_t\psi^{}_T|\,dx\,dt.
\end{eqnarray}
Noticing that $f(0)=0$, we get
\begin{equation}
\label{u0}
\displaystyle\int\limits_{\R^N}\,u_0(x)\psi^{}_T(0,x)\,dx=0.
\end{equation}
 Next, applying Young inequality, we get
 \begin{eqnarray}
 \nonumber
 \int\limits_0^T\,\int\limits_{\R^N}\,|u(t,x)|\,|\Delta\psi^{}_T(t,x)|\,dx\,dt&=
 &\int\limits_0^T\,\int\limits_{\R^N}|x|^{\alpha/p}|u|\psi^{1/p}_T
 |x|^{-\alpha/p}|\Delta\psi^{}_T|\psi^{-1/p}_T\,dx\,dt,\\
 \label{Y1}
 &\leq&\frac{1}{2}\int\limits_0^T\,\int\limits_{\R^N}\,|x|^\alpha\,|u|^p\,\psi^{}_T\,dx\,dt\\
 \nonumber&+&
 C\int\limits_0^T\,\int\limits_{\R^N}\,|x|^{-\frac{\alpha}{p-1}}
 |\Delta\psi^{}_T|^{\frac{p}{p-1}}\,\psi^{-\frac{1}{p-1}}_T\,dx\,dt,\\
 \nonumber
 &\leq&\frac{1}{2}\int\limits_0^T\,\int\limits_{\R^N}\,|x|^\alpha\,|u|^p\,\psi^{}_T\,dx\,dt+
 CT^{1+\frac{N}{2}-\frac{p}{p-1}-\frac{\alpha}{2(p-1)}},
 \end{eqnarray}
 where we have used \eqref{f}, \eqref{g} and
 $$
 |\Delta\,g^{}_T(x)|\leq \frac{C}{T}\left(g\left(\frac{|x|^2}{T}\right)\right)^{\frac{2}{p-1}}.
 $$
 Similarly, we obtain that
 \begin{equation}
 \label{Y2}
 \int\limits_0^T\,\int\limits_{\R^N}\,|u|\,|\partial_t\psi^{}_T|\,dx\,dt\leq \frac{1}{2}\int\limits_0^T\,\int\limits_{\R^N}\,|x|^\alpha\,|u|^p\,\psi^{}_T\,dx\,dt+
 CT^{1+\frac{N}{2}-\frac{p}{p-1}-\frac{\alpha}{2(p-1)}}.
 \end{equation}
 Plugging estimates \eqref{psiT}, \eqref{u0}, \eqref{Y1} and \eqref{Y2} together, we find
 \begin{equation}
 \label{ww}
 \int\limits_0^T\,\int\limits_{\R^N}\,\z(t){\mathbf w}(x)\,\psi^{}_T(t,x)\,dx\,dt\,\leq \,C\,T^{1+\frac{N}{2}-\frac{p}{p-1}-\frac{\alpha}{2(p-1)}}.
 \end{equation}
 To conclude the proof we have to find a suitable lower bound of the left hand side in \eqref{ww}. For this purpose, we use \eqref{ze} to write (for $T\geq 2$)
 \begin{eqnarray}
 \nonumber
 \int\limits_0^T\,\int\limits_{\R^N}\,\z(t)\,{\mathbf w}(x)\,\psi^{}_T(t,x)\,dx\,dt&\geq&\int\limits_{{T/2}}^T\,\int\limits_{\R^N}\,\z(t)\,{\mathbf w}(x)\,\psi^{}_T(t,x)\,dx\,dt\\
 \nonumber
 &\geq&\left(\int\limits_{{T/2}}^T\,t^m\,f\left(\frac{t}{T}\right)^{\frac{p}{p-1}}\,dt\right)\;\left(\int\limits_{\R^N}\,{\mathbf w}(x) g^{}_T(x)\,dx\right)\\
 \label{LB}
 &\geq&C\, T^{m+1}\,\int\limits_{\R^N}\,{\mathbf w}(x) g^{}_T(x)\,dx.
 \end{eqnarray}
Since ${\mathbf w}\in L^1$ and $g^{}_T(x)\to g(0)=1$ as $T\to\infty$, we obtain by the dominated convergence theorem that
$$
\int\limits_{\R^N}\,{\mathbf w}(x) g^{}_T(x)\,dx\underset{T\to \infty}{\longrightarrow} \int\limits_{\R^N}\,{\mathbf w}(x)\,dx>0.
$$
Hence, for $T$ sufficiently large, we have
$$
\int\limits_{\R^N}\,{\mathbf w}(x) g^{}_T(x)\,dx\geq \frac{1}{2}\int\limits_{\R^N}\,{\mathbf w}(x)\,dx.
$$
Recalling \eqref{ww}, we end up with
\begin{equation}
\label{www}
\int\limits_{\R^N}\,{\mathbf w}(x)\,dx\,\leq\, C\,T^{\frac{N}{2}-m-\frac{p}{p-1}-\frac{\alpha}{2(p-1)}}.
\end{equation}
Noticing that $p<\frac{N-2m+\alpha}{N-2m-2}$ and letting $T$ to infinity in \eqref{www}, we get
$$
\int\limits_{\R^N}\,{\mathbf w}(x)\,dx\leq 0.
$$
This is obviously a contradiction and the proof is complete. \qed\medskip
\end{proof}
\begin{remark}
The main novelty in this proof is the new lower bound \eqref{LB} using only the parameter $m$ instead of $\sigma$.
This illustrate that the blow up depends on the behavior of $\z$ at infinity.
\end{remark}

We finally give the proof of Theorem \ref{Bloww}.

\begin{proof}
We employ the same argument as in the proof of Theorem \ref{Blow} with a different test function. For $\varepsilon>0$ small enough, we set
$$
\varphi^{}_T(t,x)=f^{}_T(t)\, g(\varepsilon\,|x|^2),
$$
where $f, g$ are given by \eqref{f}-\eqref{g}. Similar computations as above yield
\begin{equation}
\label{w4}
\int\limits_{\R^N}\,{\mathbf w}(x)\,dx\,\leq\, C\,\left(T^{-m}+T^{-m-\frac{p}{p-1}}\right).
\end{equation}
Noticing that $m>0$ and letting $T$ to infinity, we get
$$
\int\limits_{\R^N}\,{\mathbf w}(x)\,dx\leq 0.
$$
This finishes the proof of Theorem \ref{Bloww}. \qed\medskip
\end{proof}


\section{Global existence}
The following lemma will be useful in the proof of Theorem \ref{GE}.
\begin{lemma}
\label{GEL}
Let $N\geq 2$, $-1<\sigma<0$, $-2<\alpha<0$ and
\begin{equation}
\label{P}
p\geq 1+ \frac{2+\alpha}{N-2(\sigma+1)}.
\end{equation}
Then
\begin{equation}
\label{L1}
\frac{2+p\alpha}{Np(p-1)}<\frac{1}{p_c},
\end{equation}

\begin{equation}
\label{L2}
\frac{2+p\alpha}{Np(p-1)}<\frac{N+\alpha}{Np},
\end{equation}

\begin{equation}
\label{L3}
\frac{1}{p_c}+\frac{2\sigma}{N}<\frac{1}{p_c},
\end{equation}
and
\begin{equation}
\label{L4}
\frac{1}{p_c}+\frac{2\sigma}{N}<\frac{N+\alpha}{Np}.
\end{equation}

\end{lemma}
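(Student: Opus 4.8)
The plan is to dispatch the four inequalities one at a time, each by clearing denominators and reducing to an elementary polynomial inequality. Throughout one uses the sign facts that are automatic from the hypotheses: $p>1$, $2+\alpha>0$, hence $p_c=\frac{N(p-1)}{2+\alpha}>0$ and all reciprocals are legitimate; $\sigma<0$; and $0<N-2(\sigma+1)<N$ since $0<2(\sigma+1)<2\le N$. Only \eqref{L2} and \eqref{L4} will actually use the structural assumption \eqref{P}; the other two are just restatements of $p>1$ and $\sigma<0$.

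For \eqref{L3}, subtracting $\frac{1}{p_c}$ from both sides leaves $\frac{2\sigma}{N}<0$, which holds because $\sigma<0$. For \eqref{L1}, multiplying by $Np(p-1)>0$ and inserting $\frac{1}{p_c}=\frac{2+\alpha}{N(p-1)}$ turns it into $2+p\alpha<p(2+\alpha)$, i.e. $2<2p$. For \eqref{L2}, multiplying by $Np(p-1)>0$ gives $2+p\alpha<(N+\alpha)(p-1)$, which rearranges to $p>1+\frac{2+\alpha}{N}$; since $N-2(\sigma+1)<N$ and $2+\alpha>0$, assumption \eqref{P} yields $p\ge 1+\frac{2+\alpha}{N-2(\sigma+1)}>1+\frac{2+\alpha}{N}$, so \eqref{L2} follows.

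The substantive case is \eqref{L4}. Writing $\frac{1}{p_c}=\frac{2+\alpha}{N(p-1)}$ and multiplying through by $Np(p-1)>0$, it becomes $Q(p)<0$, where
\[
Q(p):=2\sigma p^{2}+(2-2\sigma-N)\,p+(N+\alpha).
\]
Since $\sigma<0$ this parabola opens downward. I will evaluate it at the threshold $p_0:=1+\frac{2+\alpha}{N-2(\sigma+1)}$: setting $D:=N-2(\sigma+1)$ and $q:=\frac{2+\alpha}{D}$ (so that $Dq=2+\alpha$ and $N=D+2\sigma+2$), a direct expansion of $Q(1+q)$ collapses the lower-order terms and leaves $Q(p_0)=-Dq+2\sigma q^{2}+(2+\alpha)=2\sigma q^{2}<0$. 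Next, the vertex of $Q$ is at $p=\frac{N+2\sigma-2}{4\sigma}$, and $\frac{N+2\sigma-2}{4\sigma}<1$ is equivalent --- after multiplying by $4\sigma<0$ and flipping --- to $N>2+2\sigma$, which is immediate from $N\ge 2>2+2\sigma$. As $p_0>1$, the vertex lies strictly to the left of $p_0$, so $Q$ is strictly decreasing on $[p_0,\infty)$; hence $Q(p)\le Q(p_0)<0$ for all $p\ge p_0$, and in particular for the $p$ fixed by \eqref{P}. This proves \eqref{L4}.

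The only step that requires a little care is the algebraic collapse $Q(1+q)=2\sigma q^{2}$; everything else is sign-checking. It is worth remarking that this identity is exactly what makes $1+\frac{2+\alpha}{N-2(\sigma+1)}$ the natural endpoint: up to the smaller root of $Q$ (which, lying below $1$, is irrelevant here), condition \eqref{P} is precisely the requirement $Q(p)\le 0$.
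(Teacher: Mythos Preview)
Your argument is correct. For \eqref{L4} you and the paper reach the same quadratic inequality \(2\sigma p^{2}-(N-2+2\sigma)p+N+\alpha<0\) and both verify it by evaluating at the boundary case of \eqref{P}, obtaining the identical value \(2\sigma q^{2}=2\sigma(p-1)^{2}<0\) there (since at equality in \eqref{P} one has \(p-1=q\)). The only real difference is the direction of the monotonicity step: the paper rewrites the left side as a \emph{linear} function \(\Theta(\tau)=(1-p)\tau+\text{const}\) of the dimension variable \(\tau\), so decreasingness is immediate and \(N\ge\tau^{*}\) finishes the job; you instead keep \(p\) as the variable, which forces the extra vertex computation to see that the downward parabola is already past its maximum at \(p_{0}\). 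Both are short, but the paper's linear-in-\(N\) trick avoids the vertex check, while your version has the advantage of making transparent that \eqref{P} is (up to the irrelevant root below \(1\)) exactly the condition \(Q(p)\le 0\). Your treatment of \eqref{L1}--\eqref{L3} is fine and slightly more explicit than the paper's ``trivial''; in particular you correctly note that \eqref{L2} already needs \eqref{P}.
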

\begin{proof}
We only give the proof of \eqref{L4} since the other inequalities are trivial. Note that the inequality \eqref{L4} is equivalent to
\begin{equation}
\label{L5}
2\sigma\,p^2-(N-2+2\sigma)p+N+\alpha<0.
\end{equation}
To prove \eqref{L5}, consider the function
$$
\Theta(\tau)=2\sigma\,p^2-(\tau-2+2\sigma)p+\tau+\alpha=2\sigma\,p^2+\alpha-2(\sigma-1)p+(1-p)\tau.
$$
 Let $\tau^*=2\sigma+\frac{\alpha+2p}{p-1}$. It follows by using \eqref{P} that $N\geq \tau^*$. Since $p>1$ then $\Theta$ is a decreasing function. Hence
\begin{equation}
\label{Theta}
\Theta(\tau)\leq \Theta(\tau^*)=2\sigma (p-1)^2<0,\quad \forall\;\;\; \tau\geq \tau^*.
\end{equation}
Taking $\tau=N$ in \eqref{Theta}, we obtain \eqref{L5} as desired. \qed\medskip
\end{proof}

We turn now to the proof of Theorem \ref{GE}.

\begin{proof}
We argue as in \cite{YM}. Assume first that $p>\frac{N+\alpha}{N-2}$ (if $N>2$). Let $u_0\in L^{p_c}(\R^N)$, $\mathbf{w}\in L^{\ell}(\R^N)$ such that $\|u_0\|_{L^{p_c}}+\|\mathbf{w}\|_{L^{\ell}}<\varepsilon_0$ for some $\varepsilon_0>0$. We will show that the equation
\begin{equation}\label{eq:fixed-pt-eq}
u=w+\mathcal{F}(u) \hspace{0.2cm}\mbox{in}\hspace{0.2cm}\R^N\times (0,\infty); \quad w=\mathbf{S}_{0}(t)u_0+\int_0^t\mathbf{S}_{0}(t-s)\zeta(s)\mathbf{w} ds
\end{equation}
has a unique fixed point in some closed ball of $C_{b}((0,\infty);L^{p_c})$,where for $t>0$, \[\mathcal{F}(u)(t)=\int_{0}^{t}\mathbf{S}_{-\alpha}(t-s)|u(s)|^p ds,
\]
and $\mathbf{S}_{0}(t)={\rm e}^{t\Delta}$ is given by \eqref{S}.

Apply Proposition \ref{Key} to obtain the estimate
\begin{equation}\label{eq:self-mapping-bound}\|\mathcal{F}u\|_{L^{\infty}((0,\infty);L^{p_c})}\leq C\sup_{t>0}\|u(t)\|^{p}_{L^{p_c}}.
\end{equation}
Arguing similarly, we have that for $u$ and $v$ in $L^{\infty}((0,\infty);L^{p_c})$,
\begin{equation}\label{eq:contraction-bound}
\|\mathcal{F}(u)-\mathcal{F}(v)\|_{L^{\infty}(\mathbb{R}_{+};L^{p_c})}\leq C\|u-v\|_{L^{\infty}(\mathbb{R}_{+};L^{p_c})}
\left(\|u\|^{p-1}_{L^{\infty}(\mathbb{R}_{+};L^{p_c})}+\|v\|^{p-1}_{L^{\infty}(\R_{+};L^{p_c})}\right)).
\end{equation}
Using again Proposition \ref{Key}, we get
\begin{align*}
\|w\|_{L^{p_c}}&\leq \, C\|u_0\|_{L^{p_c}}+\bigg\|\int_0^t\mathbf{S}_{0}(t-s)\zeta(s)\mathbf{w}ds\bigg\|_{L^{p_c}}\\
&\leq\, C\|u_0\|_{L^{p_c}}+C\|\mathbf{w}\|_{L^{\ell}}\int_{0}^{1}\tau^{\sigma}(1-\tau)^{-(\sigma+1)}d\tau\\
&\leq\,C\|u_0\|_{L^{p_c}}+C\|\mathbf{w}\|_{L^{\ell}}\mathcal{B}(\sigma+1,-\sigma)	,	
\end{align*}
where $\mathcal{B}$ stands for the standard beta function. In the above inequalities, we have used $\ell>1$ together with the fact that $\frac{1}{p_c'}-\frac{1}{\ell'}=\frac{2(\sigma+1)}{N}$, $\ell'$ being the conjugate exponent of $\ell$. Therefore, \[\|w\|_{L^{p_c}}\leq C_1(\|u_0\|_{L^{p_c}}+\|\mathbf{w}\|_{L^{\ell}})\leq C_1\varepsilon_0.\]
 By choosing $R=2C_1\varepsilon_0$ and assuming  $2^p\,C\,C_1^{p-1}\,\varepsilon_0^{p-1}\leq 1/2$, we deduce from \eqref{eq:self-mapping-bound} and \eqref{eq:contraction-bound} that Equation \eqref{eq:fixed-pt-eq} has a unique fixed point in $\overline{B}_{R}(0)$.

 Next, we consider the case when  $1+ \frac{2+\alpha}{N-2(\sigma+1)} \leq p\leq \frac{N+\alpha}{N-2}$. Taking advantage of Lemma \ref{GEL}, we can pick a number $r>1$ such that
\begin{equation}\label{eq:cond-on-exponents}
\max\bigg\{\frac{\alpha p+2}{Np(p-1)},\frac{1}{p_c}+\frac{2\sigma}{N}\bigg\}<\frac{1}{r}<\min\bigg\{\frac{1}{p_c},\frac{N+\alpha}{Np}\bigg\},\quad r>p.
\end{equation}
In particular, we obtain that $1\leq \ell<p_c<r$. Define
$$
\mu=\frac{N}{2}\bigg(\frac{1}{p_c}-\frac{1}{r}\bigg).
$$
It follows that $0<\mu<\frac{1}{p}$. Introduce the function space $\mathbf{X}$ defined by
\[\mathbf{X}=\bigg\{u\in C_{b}\big((0,\infty);L^{r}(\R^N)\big);\;\;\;\; t^{\mu}u\in C_{b}\big((0,\infty);L^{r}(\R^N)\bigg\}
\]
 equipped with the distance $d(u,v)=\displaystyle\sup_{t>0}t^{\mu}\|u(t)-v(t)\|_{L^{r}}:=\|u-v\|_{\mathbf{X}}$. Clearly $(\mathbf{X}, d)$ is a complete metric space. We will show that  Eq. \eqref{eq:fixed-pt-eq} has a unique fixed point in some closed ball $\overline{B}_{K}(0)\subset \mathbf{X}$ with radius $K>0$ sufficiently small.
We estimate separately each of the terms of the right hand side of \eqref{eq:fixed-pt-eq}. Using  the smoothing estimate \eqref{SS} we obtain that
\begin{equation*}
\|\mathbf{S}_{0}(t)u_0\|_{L^{r}}\leq Ct^{-\frac{N}{2}\big(\frac{1}{p_c}-\frac{1}{r}\big)}\|u_0\|_{L^{p_c}}\leq Ct^{-\mu}\|u_0\|_{L^{p_c}}.
\end{equation*}
Using again \eqref{SS} together with the fact that  $r>\frac{Np}{N+\alpha}$, we get
\begin{align*}
\bigg\|\int_{0}^{t}\mathbf{S}_{\alpha}(t-s)|u(s)|^{p}ds\bigg\|_{L^{r}}&\leq C\int_{0}^{t}
(t-s)^{-\frac{N}{2}\big(\frac{p}{r}-\frac{1}{r}\big)+\frac{\alpha}{2}}\||u|^{p}\|_{L^{r/p}}ds\\
&\leq C\int_{0}^{t}\,(t-s)^{-\frac{N}{2}\big(\frac{p}{r}-\frac{1}{r}\big)+\frac{\alpha}{2}}s^{-p\mu}(s\|u\|_{L^{r}})^{p\mu}ds\\
&\leq C(\sup_{t>0}t^{\mu}\|u\|_{L^{r}})^{p}\int_{0}^{t}\,(t-s)^{-\frac{N(p-1)}{2r}+\frac{\alpha}{2}}s^{-p\mu}ds\\
&\leq Ct^{-\frac{N(p-1)}{2r}+\frac{\alpha}{2}-p\mu+1}\|u\|^{p}_{\mathbf{X}}
\int_{0}^{1}(1-s)^{-\frac{N(p-1)}{2r}+\frac{\alpha}{2}}s^{-p\mu}ds\\
&\leq Ct^{-\mu}\|u\|^{p}_{\mathbf{X}}\mathcal{B}\bigg(1-p\mu,1-\frac{N(p-1)}{2r}+\frac{\alpha}{2}\bigg)\\
&\leq Ct^{-\mu}\|u\|^{p}_{\mathbf{X}}.
\end{align*}
Concerning the last term, we use \eqref{SS} and proceed as follows
\begin{align*}
\bigg\|\int_{0}^{t}\mathbf{S}_{0}(t-s)\mathbf{w}\zeta(s)\bigg\|_{L^{r}}ds&\leq C\int_{0}^{t}(t-s)^{-\frac{N}{2}\big(\frac{1}{\ell}-\frac{1}{r}\big)+\frac{\alpha}{2}}\|\mathbf{w}\|_{L^{\ell}}
\zeta(s)ds\\
&\leq C\|\mathbf{w}\|_{L^{\ell}}t^{-\frac{N}{2}\big(\frac{1}{\ell}-\frac{1}{r}\big)+\sigma+1}
\int_{0}^{1}(1-s)^{-\frac{N}{2}\big(\frac{1}{\ell}-\frac{1}{r}\big)+\frac{\alpha}{2}}s^{\sigma}ds\\
&\leq C\|\mathbf{w}\|_{L^{\ell}}t^{-\frac{N}{2}\big(\frac{1}{\ell}-\frac{1}{r}\big)+\sigma+1}
\mathcal{B}\bigg(\sigma+1,1-\frac{N}{2}\big(\frac{1}{\ell}-\frac{1}{r}\big)+\frac{\alpha}{2}\bigg)\\
&\leq C\|\mathbf{w}\|_{L^{\ell}}\, t^{-\mu}.
\end{align*}
Put $\mathcal{G}u=w+\mathcal{F}(u), \hspace{0.1cm} w=\mathbf{S}_{0}(t)u_0+\int_0^t\mathbf{S}_{0}(t-s)\zeta(s)\mathbf{w}$, then  \[\displaystyle
\sup_{t>0}t^{\mu}\|\mathcal{G}u(t)\|_{L^{r}}\leq C\left(\|u_0\|_{L^{p_c}}+K^{p}+\|\mathbf{w}\|_{L^{\ell}}\right)\leq C\varepsilon_0.
\]
 By taking $\varepsilon_0$ and $K>0$ sufficiently small, one can deduce that $C(\|u_0\|_{L^{p_c}}+\|\mathbf{w}\|_{L^{\ell}})\leq K$. Hence $\displaystyle
\sup_{t>0}t^{\mu}\|\mathcal{G}(t)\|_{L^{r}}\leq K$. Therefore $\mathcal{G}$ maps $\overline{B}_{K}(0)$ into itself. Arguing in a similar fashion, we can show that $\mathcal{G}$ is a contraction map on $\overline{B}_{K}(0)$ for sufficiently small $K$. The Banach fixed point theorem allows us to deduce the existence of an unique solution $u$ to \eqref{integral} in $\overline{B}_{K}(0)\subset\mathbf{X}$ for sufficiently small $K$. This finishes the proof of Theorem \ref{GE}. \qed\medskip
\end{proof}

\section*{Acknowledgements} The author warmly thanks the anonymous referee for his/her useful and nice comments that were very important to improve the paper. The author is grateful to Professor Philippe Souplet for providing him some useful references.


\end{document}